\numberwithin{equation}{section}
\newcommand{\be}{\begin{eqnarray}}
\newcommand{\mE}{\end{eqnarray}}
\newcommand{\ce}{\begin{eqnarray*}}
\newcommand{\de}{\end{eqnarray*}}
\newtheorem{theorem}{Theorem}[section]
\newtheorem{lemma}[theorem]{Lemma}
\newtheorem{remark}[theorem]{Remark}
\newtheorem{definition}[theorem]{Definition}
\newtheorem{proposition}[theorem]{Proposition}
\newtheorem{example}[theorem]{Example}
\newtheorem{corollary}[theorem]{Corollary}
\def\eps{\varepsilon}
\def\p{\partial}
\def\[{{\Big[}}
\def\]{{\Big]}}
\def\<{{\langle}}
\def\>{{\rangle}}
\def\({{\Big(}}
\def\){{\Big)}}
\def\bx{{\mathbf{x}}}
\def\dif{{\mathord{{\rm d}}}}
\def\no{\nonumber}
\def\={&\!\!=\!\!&}
\def\bt{\begin{theorem}}
\def\et{\end{theorem}}
\def\bl{\begin{lemma}}
\def\el{\end{lemma}}
\def\br{\begin{remark}}
\def\er{\end{remark}}
\def\bd{\begin{definition}}
\def\ed{\end{definition}}
\def\bp{\begin{proposition}}
\def\ep{\end{proposition}}
\def\bc{\begin{corollary}}
\def\ec{\end{corollary}}
\def\bx{\begin{example}}
\def\ex{\end{example}}
\def\cD{{\mathcal D}}
\def\cL{{\mathcal L}}
\def\mC{{\mathbb C}}
\def\mE{{\mathbb E}}
\def\mN{{\mathbb N}}
\def\mP{{\mathbb P}}
\def\mR{{\mathbb R}}
\def\sF{{\mathscr F}}
\def\sI{{\mathscr I}}
\def\sJ{{\mathscr J}}
\def\sL{{\mathscr L}}
\def\sN{{\mathscr N}}
\def\sO{{\mathscr O}}
\def\sU{{\mathscr U}}
\def\sX{{\mathscr X}}
\def\sY{{\mathscr Y}}
\def\sZ{{\mathscr Z}}
\def\geq{\geqslant}
\def\leq{\leqslant}
\begin{document}

\title{Asymptotic behavior of multiscale stochastic partial differential equations}

\date{}

\author{Michael R\"ockner,\,\, Longjie Xie\,\, and\,\, Li Yang}

\address{Michael R\"ockner:
	Fakult\"{a}t f\"{u}r Mathematik, Universit\"{a}t Bielefeld, D-33501 Bielefeld, Germany $\&$ Academy of Mathematics and Systems Science,
	Chinese Academy of Sciences (CAS), Beijing, 100190, P.R.China\\
	Email: roeckner@math.uni-bielefeld.de
}

\address{Longjie Xie:
	School of Mathematics and Statistics $\&$ Research Institute of Mathematical Science, Jiangsu Normal University,
	Xuzhou, Jiangsu 221000, P.R.China\\
	Email: longjiexie@jsnu.edu.cn
}

\address{Li Yang:
	School of Mathematics, Shandong University,
	Jinan, Shandong 250100, P.R.China\\
	Email: llyang@mail.sdu.edu.cn
}

\thanks{This work is supported  by the DFG through CRC 1283 and NSFC (No. 11701233, 11931004).}

\begin{abstract}
In this paper, we  study the asymptotic behavior of a semi-linear slow-fast stochastic partial differential
equation with singular coefficients. Using the Poisson equation in Hilbert space, we first establish the strong convergence in the averaging principe, which can be viewed as a functional law of large numbers. Then we study the stochastic fluctuations between the original system and its averaged equation. We show that the normalized difference  converges weakly to an Ornstein-Uhlenbeck type process, which can be viewed as a functional central limit theorem. Furthermore, rates of convergence both for the strong convergence and the normal deviation are obtained, and these convergence   are shown not to depend on the regularity of the coefficients in the equation for the fast variable, which coincides with the intuition, since in the limit systems the fast component has been totally averaged or homogenized
out.
	\bigskip

	\noindent {{\bf AMS 2010 Mathematics Subject Classification:} 60H15, 70K65, 60F05.}
	
	\vspace{2mm}
	\noindent{{\bf Keywords and Phrases:} Stochastic partial differential equations; averaging principle; normal deviations; Poisson equation in Hilbert space.}
\end{abstract}

\maketitle

\tableofcontents

\section{Introduction}

Consider the following fully coupled slow-fast stochastic partial differential equation (SPDE for short) in $H_1\times H_2$:
\begin{equation} \label{spde1}
\left\{ \begin{aligned}
&\dif X^{\eps}_t =AX^{\eps}_t\dif t+F(X^{\eps}_t, Y^{\eps}_t)\dif t+\dif W^1_t,\quad\qquad\qquad\quad\!\! X^{\eps}_0=x\in H_1,\\
&\dif Y^{\eps}_t =\eps^{-1}BY^{\eps}_t\dif t+\eps^{-1}G(X^{\eps}_t, Y^{\eps}_t)\dif t+\eps^{-1/2} \dif W_t^2,\quad Y^{\eps}_0=y\in H_2,
\end{aligned} \right.
\end{equation}
where $H_1, H_2$ are two Hilbert spaces, $A: \cD(A)\subset H_1\to H_1$ and $B: \cD(B)\subset H_2\to H_2$ are linear operators, $F: H_1\times H_2\to H_1$ and $G: H_1\times H_2\to H_2$ are reaction coefficients, $W^1_t$ and $W^2_t$ are mutually independent $H_1$- and $H_2$-valued $(\sF_t)$-Wiener processes both defined on some probability space $(\Omega,\mathscr{F},\mP)$ with a normal filtration $(\sF_t)_{t\geq0}$, and the small parameter $0<\eps\ll 1$ represents the separation  of  time scales  between   the slow process $X_t^\eps$ (which can be thought
of as the mathematical model for a phenomenon appearing at the natural time scale) and  the fast motion $Y_t^\eps$ (with time order $1/\eps$, which can be interpreted as the fast environment). Such multi-scale models appear frequently in many real-world  dynamical systems.  Typical examples   include  climate weather interactions  (see e.g. \cite{Ki,MTV}), macro-molecules (see e.g. \cite{BKRP,KK}), geophysical fluid flows (see e.g. \cite{GD}), stochastic volatility in finance (see e.g. \cite{FFK}), etc.
However, it is often too difficult to analyze or   simulate the underlying system (\ref{spde1}) directly due to  the two widely separated time scales and the cross interactions between the slow  and fast  modes. Thus a simplified equation
which governs the evolution of the system over a long time scale is highly desirable and is quite important for applications.

\vspace{1mm}
It is known that under suitable regularity assumptions on the coefficients, the slow process $X_t^\eps$ will converge strongly (in the $L^2(\Omega)$-sense) to the solution of the following reduced equation:
\begin{align}\label{spde2}
\dif \bar{X}_t=A\bar{X}_t\dif t+\bar{F}(\bar{X}_t)\dif t+\dif W_t^1,\quad \bar X_0=x\in H_1,
\end{align}
where the averaged coefficient is given by
\begin{align}\label{df1}
\bar{F}(x):=\int_{H_2}F(x,y)\mu^x(\dif y),
\end{align}
and $\mu^x(\dif y)$ is the unique invariant measure  of the process
 $Y_t^x$, which is the solution of the frozen equation
\begin{align}\label{froz}
\dif Y_t^x=BY_t^x \dif t+G(x,Y_t^x)\dif t+\dif W_t^2, \quad Y_0^x=y\in H_2.
\end{align}
The effective system (\ref{spde2}) then captures the  essential dynamics of the system (\ref{spde1}), which does not depend on the fast variable any more and thus is much simpler than SPDE (\ref{spde1}). This theory, known as the averaging principle, was first developed for deterministic systems by Bogoliubov \cite{BM}, and extended to stochastic differential equations (SDEs for short)  by Khasminskii \cite{K1}. In the past decades, the averaging principle for systems with a finite number of degrees of freedom has been intensively studied, see e.g. \cite{BK,GR,HLi,KY,V0} and the references therein.
Passing from the finite dimensional to the infinite dimensional setting is more difficult, and the existing results  in the literature are relatively few. In \cite{CF}, Cerrai and Freidlin proved the averaging principle for slow-fast stochastic reaction-diffusion system  where there is no noise in the slow equation. Later, Cerrai \cite{Ce,C2} generalized this result to general reaction-diffusion equations with multiplicative noise and  coefficients of polynomial growth , see also \cite{CL,DS,FWL0} for further developments.
We also mention that in these results, no rates of convergence in terms of  $\eps\to0$ are provided. But
for numerical purposes, it is important to know the rate of convergence of the slow variable to the effective dynamic. The main motivation comes from the well-known Heterogeneous Multi-scale Methods  used to approximate the slow component in system (\ref{spde1}), see e.g. \cite{EL,KT}.
In this direction,  Br\'ehier \cite{Br1} first studied the rates of strong convergence for the averaging principle of SPDEs with  noise only in the fast motion, and  $(\frac{1}{2}$-)-order of convergence is obtained. Extensions to  general stochastic reaction-diffusion equations are made in \cite{WR}, and  $\frac{1}{2}$-order of convergence is obtained. For more recent results, we refer the interested readers to the work \cite{Br2}  and the references therein.

\vspace{1mm}
The strong convergence in the averaging principle can be viewed as a functional law of large numbers. Once we  obtain the validity of the averaging principle, it is natural to go one step further to consider the functional central limit theorem. Namely, to study the small fluctuations of the original system (\ref{spde1}) around its averaged equation (\ref{spde2}).
To leading order, these fluctuations can be captured by characterizing the asymptotic behavior of the normalized difference
\begin{align}\label{zt}
Z_t^{\eps}:=\frac{X_t^\eps-\bar X_t}{\sqrt{\eps}}
\end{align}
as $\eps$ tends to 0. Under extra regularity assumptions on the coefficients, the deviation process $Z_t^\eps$ is known to converge weakly (in the distribution sense) towards a Gaussian  process $\bar Z_t$, whose covariance can be described explicitly.  Such result, also known as the Gaussian approximation, is closely related to the homogenization  for solutions of  partial differential equations with singularly perturbed terms, which has its own interest in the theory of PDEs, see e.g. \cite{HP,HP2} and \cite[Chapter IV]{FW}.    For the study of normal deviations of multi-scale SDEs, we refer the readers to the fundamental paper by Khasminskii \cite{K1},  see also \cite{P-V,P-V2,RX}  for further developments. In the infinite dimensional situation, as far as we know,  there exist only two papers. Cerrai \cite{Ce2} studied the normal deviations for slow-fast SPDEs in a special case, i.e., a deterministic reaction-diffusion equation with one dimensional space variable perturbed by a fast motion. Later, this was generalized to general  stochastic reaction-diffusion equations by Wang and Roberts \cite{WR}. In both papers the methods of proof are based on the time discretisation procedure  which involve some complicated  tightness arguments. We point out  that besides having intrinsic interest, the functional central limit theorem  is also useful in applications. In particular, we can get the formal asymptotic expansion
$$
X_t^\eps\stackrel{\cD}{\approx} \bar X_t+\sqrt{\eps}\bar Z_t,
$$
where $\stackrel{\cD}{\approx}$ means approximate equality of probability distributions. Such  expansion has been introduced in
the context of stochastic climate models. In physics
this is also called the Van Kampen's  scheme (see e.g. \cite{Ar,HK}),
which provides  better approximations for the original system (\ref{spde1}).

\vspace{1mm}
In the present paper, we shall first establish a stronger convergence result in the averaging principle  for SPDE (\ref{spde1}). More precisely, we show that for any $T>0,q\geq 1$ and $\gamma\in[0,1/2)$,  there exists a constant $C_T>0$ such that
	$$
	\sup\limits_{t\in[0,T]}\mE\|(-A)^\gamma( X^{\eps}_t-\bar X_t)\|^q\leq C_{T}\,\eps^{\frac{q}{2}},
	$$
see {\bf Theorem \ref{main1}} below. Compared with the existing results in the literature, we assume that the coefficients are only H\"older continuous with respect to the fast variable,  and we obtain not only the strong convergence in $L^q(\Omega)$-sense with any $q\geq1$, but also in $\|\cdot\|_{(-A)^\gamma}$ norm with any $\gamma\in [0,1/2)$, which is particularly interesting  for SPDEs in comparison with the finite dimensional setting  since $A$ is an unbounded operator and seems to have never been obtained before. Moreover, the $\frac{1}{2}$-order rate of convergence is also obtained, which is known to be optimal (when $\gamma=0$). In particular, we show that the convergence in the averaging principle does not depend on the regularity of the coefficients with respect to the fast variable. This coincides  with the intuition, since in the limit equation the fast component has been totally averaged out. We point out that the strong convergence of $(-A)^\gamma X_t^\eps$ to $(-A)^\gamma\bar X_t$ will play an important role below in our study of the homogenization for the normalized difference  $Z_t^\eps$. Furthermore,  the index $\gamma<1/2$ should be the best  possible, see  Remark \ref{r1} for more detailed explanations.

\vspace{1mm}
The argument we shall use to establish the above strong convergence is different from those in \cite {Br1,Ce,C2,CF,CL,DS,FWL0,WR}, where the classical Khasminskii's time discretisation procedure is used.
Our method is based on the Poisson equation. More precisely, consider the following Poisson equation in the Hilbert space $H_1\times H_2$:
\begin{align}\label{intp}
\cL_2(x,y)\psi(x,y)=-\phi(x,y),\quad y\in H_2,
\end{align}
where $\cL_2(x,y)$ is an ergodic elliptic operator with respect to the $y$ variable (see (\ref{L2}) below), $x\in H_1$ is regarded as a parameter, and $\phi: H_1\times H_2\to \mR$ is a measurable function. Such kind of equation, i.e., with a parameter and in the whole space (without boundary condition), has been studied only  relatively recently and is now realized  to be  very important in the theory of limit theorems in probability theory and numerical
approximation for time-averaging estimators and invariant measures, see e.g. \cite{MST,PP}. In the finite dimensional situation, equations of the form (\ref{intp}) have been studied in a series of papers by  Pardoux and Veretennikov \cite{P-V,P-V2,P-V3}, see also \cite{RX} and the references therein for further developments. Undoubtedly, extension to the infinite dimensional setting will be more difficult due to the unboundedness of the involved  operators.
In the recent work \cite{Br2}, the author studies  the rate of convergence in the averaging principle for slow-fast SPDEs with regular coefficients by assuming the solvability of the corresponding Poisson equation as well as regularity properties of the solutions. In addition, the SPDE considered therein is not fully coupled, i.e., the fast component $Y_t^\eps$ does not depend  on the slow process $X_t^\eps$, and  the two Hilbert spaces $H_1$, $H_2$ and the unbounded operators $A$, $B$ are assumed to be the same, which are used in the whole proof in an essential way.
Here, we shall establish the well-posedness of the Poisson equation (\ref{intp}) with only H\"older coefficients and in general Hilbert spaces $H_1\times H_2$, and study the regularity properties of the unique solution with respect to both the $y$-variable and the parameter $x$, see {\bf Theorem \ref{PP}} below, which should be  of independent interest.
Then, we  use the Poisson equation to derive a strong fluctuation estimate (see Lemma \ref{strf}) for an integral functional of the slow-fast   SPDE (\ref{spde1}). The strong convergence in the averaging principle with optimal rate of convergence  then follows directly. In addition, we also  provide a simple way to verify the regularity of the averaged coefficients by using Theorem \ref{PP} (see Lemma \ref{aveF} below), which is a  separate problem that one always encounters in the study of averaging principles, central limit theorems, homogenization and other limit theorems.

\vspace{1mm}

Next, we proceed to study the small fluctuations of the slow process $X_t^\eps$ around its average $\bar X_t$, i.e., we are interested in the homogenization behavior for $Z_t^\eps$ which is defined by (\ref{zt}). In view of (\ref{spde1}) and (\ref{spde2}), we have
\begin{align}\label{zte}
\dif Z^\eps_t&=A Z^\eps_t\dif t+\frac{1}{\sqrt{\eps}}\Big[F(X_t^\eps,Y_t^\eps)-\bar F(\bar X_t)\Big]\dif t\no\\
&=A Z^\eps_t\dif t+\frac{1}{\sqrt{\eps}}\Big[\bar F(X^\eps_t)-\bar F(\bar X_t)\Big]\dif t+\frac{1}{\sqrt{\eps}}\delta F(X_t^\eps,Y_t^\eps)\dif t,
\end{align}
where
\begin{align}\label{dF}
\delta F(x,y):=F(x,y)-\bar F(x).
\end{align}
We demonstrate that $Z_t^\eps$ converges weakly  to an  Ornstein-Uhlenbeck type process $\bar Z_t$ which satisfies the following linear SPDE:
\begin{align*}
\dif \bar Z_t=A\bar Z_t\dif t+D_x\bar F(\bar X_t).\bar Z_t\dif t+\sigma(\bar X_t)\dif \tilde W_t,
\end{align*}
where   $\tilde W_t$ is another cylindrical  Wiener process which is independent of $W_t^1$, and the diffusion coefficient $\sigma$ is Hilbert-Schmidt operator valued and  given by (\ref{sst}), see {\bf Theorem \ref{main3}} below. Compared with \cite{Ce2,WR}, our system (\ref{spde1}) is more general, and the coefficients are assumed to be only H\"older continuous with respect to the fast variable, and we provide a more precise formula for the new diffusion coefficient $\sigma$.  Moreover, the arguments we use to prove the above convergence is different from \cite{Ce2,WR}, and in addition the rate of convergence is obtained, which does not depends on the regularity of the coefficients with respect to the fast variable.

\vspace{1mm}
It turns out that our method to prove the above functional central limit theorem is closely and universally connected with the proof of the strong convergence in the averaging principle.  Namely,  we shall first use  the result on the Poisson equation (\ref{intp}) established in Theorem \ref{PP}  to derive some weak fluctuation estimates (see Lemma \ref{weaf}) for an integral functional involving the processes  $(X_t^\eps, Y_t^\eps)$ and $Z_t^\eps$. Combining  with the Kolmogorov equation associated with the process $(\bar X_t, \bar Z_t)$, we  prove the weak convergence of $Z_t^\eps$ to $\bar Z_t$  directly,  and rate of convergence is obtained as easy by-product. In addition, it will be quite easy to capture the structure of the homogenization limit $\bar Z_t$ from our arguments.
Here, we note that the whole system of equations satisfied by $(\bar X_t, \bar Z_t)$ is an SPDE with multiplicative noise. Even though infinite dimensional Kolmogorov equations with nonlinear diffusion coefficients have been studied very recently in \cite{Br4}, the regularity of the solutions obtained therein are not sufficient for our purpose. Thus, we derive some new regularity for the solution with respect to the $z$ variable (see Theorem \ref{lako} below), and develop a  trick in the proof of Theorem \ref{main3} to avoid using the regularity for the solution with respect to the $x$ variable.  Our approach can also be  adapted to study the normal deviations  for other classes of multi-scale SPDEs. We shall address these problems in future works.

\vspace{1mm}
The rest of this paper is organized as follows. In Section 2, we introduce some assumptions and state our main results. Section 3 is devoted to study the  Poisson equation in Hilbert spaces. Then, we prove the strong convergence result, Theorem \ref{main1}, and the normal deviation result, Theorem \ref{main3},  in Section 4 and Section 5, respectively. Finally, in the Appendix we prove some necessary estimates for the solution of the multiscale system (\ref{spde1}), which are slight generalizations of the existing results in the literature.
Throughout this paper, the letter $C$ with or without subscripts
will denote a positive constant, whose value may change in different places, and whose
dependence on parameters can be traced from the calculations.

\vspace{1mm}
{\bf Notations:}
To end this section, we introduce some notations, which will be used throughout this paper. Let $H_1, H_2$ and $H$ be three Hilbert spaces endowed with the scalar products $\langle\cdot,\cdot\rangle_1$, $\langle\cdot,\cdot\rangle_2$ and $\langle\cdot,\cdot\rangle_H$, respectively. The corresponding norms will be denoted by $\|\cdot\|_1,\|\cdot\|_2$ and $\|\cdot\|_H$. We use $\sL(H_1,H_2)$ to denote the space of all linear and bounded operators from $H_1$ to $H_2$. If $H_1=H_2,$ we write $\sL(H_1)=\sL(H_1,H_1)$  for simplicity. Recall that an operator $Q\in\sL(H)$ is called Hilbert-Schmidt if
$$
\Vert Q\Vert_{\mathscr{L}_2(H)}^2:=Tr(QQ^{*})<+\infty.
$$
We shall denote  the space of all Hilbert-Schmidt operators on $H$ by $\mathscr{L}_2(H)$.

\vspace{1mm}
For any $x\in H_1$, $y\in H_2$ and $\phi: H_1\times H_2\to H$,  we say that $\phi$ is G\^ateaux differentiable at $x$ if there exists a $D_x\phi(x,y)\in \sL(H_1,H)$ such that for all $h_1\in H_1$,
$$
\lim_{\tau\to 0}\frac{\phi(x+\tau h_1,y)-\phi(x,y)}{\tau}=D_x\phi(x,y).h_1.
$$
If in addition
$$
\lim_{\|h_1\|_1\to 0}\frac{\|\phi(x+h_1,y)-\phi(x,y)-D_x\phi(x,y).h_1\|_H}{\|h_1\|_1}=0,
$$
$\phi$ is called Fr\'echet differentiable at $x$. Similarly, for any $k\geq 2$ we can define the $k$ times G\^ateaux and Fr\'echet derivative of $\phi$ at $x$, and we will identify the higher order  derivative $D^k_x\phi(x,y)$ with a linear operator in $\sL^k(H_1,H):=\sL(H_1,\sL^{(k-1)}(H_1,H))$, endowed with the operator norm
\begin{align*}
\|D_x^k\phi(x,y)\|_{\sL^k(H_1,H)}:=\sup_{\|h_1\|_1,\|h_2\|_1,\cdots,\|h_k\|_1, \|h\|_H\leq 1}\!\<D^k_x\phi(x,y).(h_1,h_2,\cdots,h_k),h\>_H.
\end{align*}
 In the same way, we define the G\^ateaux and Fr\'echet derivatives of $\phi$ with respect to the $y$ variable, and we have $D_y\phi(x,y)\in \sL(H_2,H)$, and for $k\geq 2$, $D^k_y\phi(x,y)\in \sL^k(H_2,H) :=\sL(H_2,\sL^{(k-1)}(H_2,H))$.

\vspace{1mm}
We will denote by $L^\infty_p(H_1\times H_2,H)$  the space of  all measurable maps $\phi: H_1\times H_2\to H$ with linear growth in $x$ and polynomial growth in $y$, i.e., there exists a constant $p\geq 1$  such that
$$
\|\phi\|_{L^\infty_p(H)}:=\sup_{(x,y)\in H_1\times H_2}\frac{\|\phi(x,y)\|_H}{1+\|x\|_1+\|y\|_2^p}<\infty.
$$
For $k\in\mN$, the space $C^{k,0}_p(H_1\times H_2,H)$ contains all maps $\phi\in L^\infty_p(H_1\times H_2,H)$ which are $k$ times G\^ateaux differentiable at any $x\in H_1$ and
$$
\|\phi\|_{C_p^{k,0}(H)}:=\sup_{(x,y)\in H_1\times H_2}\frac{\sum\limits_{\ell=1}^k\|D_x^\ell\phi(x,y)\|_{\sL^\ell(H_1,H)}}{1+\|y\|_2^p}<\infty.
$$
Similarly, the space $C^{0,k}_p(H_1\times H_2,H)$ consists of all maps $\phi\in L^\infty_p(H_1\times H_2,H)$  which are $k$ times G\^ateaux differentiable at any $y\in H_2$ and
\begin{align}\label{norm}
\|\phi\|_{C_p^{0,k}(H)}:=\sup_{(x,y)\in H_1\times H_2}\frac{\sum\limits_{\ell=1}^k\|D_y^\ell\phi(x,y)\|_{\sL^\ell(H_2,H)}}{1+\|x\|_1+\|y\|_2^p}<\infty.
\end{align}
 We also introduce the space $\mC^{0,k}_p(H_1\times H_2,H)$ consisting of all maps which are $k$ times Fr\'echet differentiable at any $y\in H_2$ and satisfies (\ref{norm}).
For $k,\ell\in\mN$, let  $C^{k,\ell}_p(H_1\times H_2,H)$ be the space of all maps satisfying
$$
\|\phi\|_{C_p^{k,\ell}(H)}:=\|\phi\|_{L^\infty_p(H)}+\|\phi\|_{C_p^{k,0}(H)}+\|\phi\|_{C_p^{0,\ell}(H)}<\infty,
$$
and for $\eta\in(0,1)$, we use $C_p^{k,\eta}(H_1\times H_2,H)$ to denote the subspace of $C_p^{k,0}(H_1\times H_2,H)$ consisting of all maps such that
$$
\|\phi(x,y_1)-\phi(x,y_2)\|_H\leq C_0\|y_1-y_2\|_2^\eta\big(1+\|x\|_1+\|y_1\|_2^p+\|y_2\|_2^p\big).
$$
When the subscript $p$ is replaced by $b$ in the notations for above spaces, we mean that the map itself and its derivatives are all bounded. When $H=\mR$, we will omit the letter $H$ in the above notations  for simplicity.

\vspace{1mm}

\section{Statement of the main results}

\subsection{Assumptions and preliminaries}

For $i=1,2$, let $\{e_{i,n}\}_{n\in \mN}$ be a complete orthonormal basis of $H_i$. We assume that the two unbounded linear operators $A$ and $B$, with domains  $\cD(A)$ and $\cD(B)$, satisfy the following condition:

 \vspace{2mm}
\noindent{\bf (A1):} There exist non-decreasing sequences of real positive numbers $\{\alpha_n\}_{n\in\mN}$ and $\{\beta_n\}_{n\in\mN}$ such that
\begin{align}\label{bnbn}
Ae_{1,n}=-\alpha_ne_{1,n},\quad Be_{2,n}=-\beta_ne_{2,n},\quad \forall n\in\mN.
\end{align}

 \vspace{1mm}
\noindent In this setting, the powers of  $-A$ and $-B$ can be easily defined as follows: for any $\theta\in[0,1]$,
$$
(-A)^\theta x:=\sum_{n\in\mN}\alpha_n^{\theta}\langle x,e_{1,n}\rangle_1 e_{1,n}\quad\text{and}\quad (-B)^\theta y:=\sum_{n\in\mN}\beta_n^{\theta}\langle y,e_{2,n}\rangle_2 e_{2,n},
$$
with domains
$$
\cD((-A)^\theta):=\bigg\{x\in H_1: \|x\|_{(-A)^\theta}^2:=\sum\limits_{n\in\mN}\alpha_n^{2\theta}\langle x,e_{1,n}\rangle_1^2<\infty\bigg\}
$$
and
$$
\cD((-B)^\theta):=\bigg\{y\in H_2: \|y\|_{(-B)^\theta}^2:=\sum\limits_{n\in\mN}\beta_n^{2\theta}\langle y,e_{2,n}\rangle_2^2<\infty\bigg\}.
$$
Moreover, the corresponding semigroups $\{e^{tA}\}_{t\geq0}$ and $\{e^{tB}\}_{t\geq0}$  can be defined through the following spectral formulas: for any $t\geq 0$, $x\in H_1$  and $y\in H_2$,
$$
e^{tA}x:=\sum_{n\in\mN}e^{-\alpha_nt}\langle x,e_{1,n}\rangle_1\,e_{1,n}\quad\text{and}\quad e^{tB}y:=\sum_{n\in\mN}e^{-\beta_nt}\langle y,e_{2,n}\rangle_2\,e_{2,n}.
$$
The following regularization properties for these semigroups are more or less standard.
We write them for $e^{tA}$, but they also hold for $e^{tB}$.

\bp\label{arp}
Let $\gamma\in[0,1]$ and $\theta\in[0,\gamma]$. We have:

\vspace{2mm}
\noindent
(i) For any $t>0$ and $x\in \cD((-A)^\theta)$,
\begin{align}\label{pp1}
\|e^{tA}x\|_{(-A)^\gamma}\leq C_{\gamma,\theta}t^{-\gamma+\theta}e^{-\frac{\alpha_1}{2}t}\|x\|_{(-A)^\theta};
\end{align}
(ii) For any $0\leq s\leq t$  and $x\in H_1$,
\begin{align}\label{pp2}
\|e^{tA}x-e^{sA}x\|_1\leq C_\gamma(t-s)^{\gamma}\|e^{sA}x\|_{(-A)^\gamma};
\end{align}
(iii) For any $0<s\leq t$ and $x\in \cD((-A)^\theta)$,
\begin{align}\label{pp3}
\|e^{tA}x-e^{sA}x\|_1\leq C_{\gamma,\theta}\frac{(t-s)^{\gamma}}{s^{\gamma-\theta}}e^{-\frac{\alpha_1}{2}s}\|x\|_{(-A)^\theta},
\end{align}
where $\alpha_1$ is the smallest eigenvalue of $A$, and $C_\gamma, C_{\gamma,\theta}>0$ are constants.
\ep

\begin{proof}
For any $t>0$ and $x\in \cD((-A)^\theta),$ we have
\begin{align*}
&\|(-A)^\gamma e^{tA}x\|_1^2
=\Big\|\sum_{n\in\mN}\alpha_n^{\gamma}e^{-\alpha_nt}\<x,e_{1,n}\>_1e_{1,n}\Big\|_1^2\\
&=\sum_{n\in\mN} \alpha_n^{2(\gamma-\theta)}e^{-2\alpha_nt}\alpha_n^{2\theta}|\<x,e_{1,n}\>_1|^2\\
&\leq C_{\gamma,\theta} t^{-2(\gamma-\theta)}e^{-\alpha_1t/2}\sum_{n\in\mN} \alpha_n^{2\theta}|\<x,e_{1,n}\>_1|^2=C_{\theta,\gamma} t^{-2(\gamma-\theta)}e^{-\alpha_1t/2}\|x\|_{(-A)^\theta}^2,
\end{align*}
which yields (\ref{pp1}). To show estimate (\ref{pp2}), by the basic inequality that $1-e^{-\alpha t}\leq C_{\alpha,\gamma} t^\gamma$ with $\gamma\in[0,1],\alpha>0$, we deduce that for any $0\leq s\leq t$ and $x\in H_1$,
\begin{align*}
\|e^{tA}x-e^{sA}x\|_1^2
&=\Big\|\sum_{n\in\mN}(e^{-\alpha_nt}-e^{-\alpha_ns})\<x,e_{1,n}\>_1e_{1,n}\Big\|_1^2\\
&\leq C_\gamma\sum_{n\in\mN} \alpha_n^{2\gamma}(t-s)^{2\gamma}e^{-2\alpha_ns}|\<x,e_{1,n}\>_1|^2\\
&=C_\gamma(t-s)^{2\gamma}|\<(-A)^\gamma e^{sA}x,e_{1,n}\>_1|^2\\
&=C_\gamma(t-s)^{2\gamma}\| e^{sA}x\|_{(-A)^\gamma}^2.
\end{align*}
Combining (\ref{pp1}) and (\ref{pp2}), we  immediately get (\ref{pp3}).
\end{proof}

For $i=1,2$, let $Q_i$ be two linear self-adjoint operators on $H_i$ with positive eigenvalues $\{\lambda_{i,n}\}_{n\in\mN}$, i.e.,
$$
Q_ie_{i,n}=\lambda_{i,n}e_{i,n},\quad\forall n\in\mN.
$$
Let $W_t^i,i=1,2$, be $H_i$-valued $Q_i$-Wiener processes both defined on a complete filtered probability space $(\Omega,\mathscr{F},\mathscr{F}_t,\mP)$. Then it is known that $W_t^i$ can be written as
$$
W_t^i=\sum_{n\in\mN}\sqrt{\lambda_{i,n}}\beta_{i,n}(t)e_{i,n},
$$
where $\{\beta_{i,n}\}_{n\in\mN}$ are mutual independent real-valued Brownian motions. Note that $W_t^i$ ($i=1,2$) are non-degenerate. We shall further assume that:

 \vspace{2mm}
\noindent{\bf (A2):} For $i=1,2$,
$$
Tr(Q_i):=\sum_{n\in\mN}\lambda_{i,n}<+\infty\quad\text{and}\quad Tr((-A)Q_1)<+\infty,
$$
and for any $T>0$,
\begin{align*}
\int_0^T\Upsilon_t^{\frac{1+\theta}{2}}\dif t<\infty,
\end{align*}
where 
\begin{align*}
\Upsilon_t:=\sup\limits_{n\geq 1}\frac{2\beta_n}{\lambda_{2,n}(e^{2\beta_nt}-1)}<\infty,
\end{align*}
$\beta_n$ is given by (\ref{bnbn}), and $\theta\geq\max{(\eta,1-\eta)}$ with $\eta$ being the H\"older regularity of the coefficients in the assumptions of Theorem 2.2 below (see \cite[Lemma 9]{DF} for this condition).

\subsection{Main results}

The first main result of this paper is about the strong convergence in the averaging principle for SPDE (\ref{spde1}).

\bt[Strong convergence]\label{main1}
Let $T>0$, $x\in \cD((-A)^\theta)$ and $y\in \cD((-B)^\theta)$ with $\theta>0$. Assume that  ({\bf A1}) and ({\bf A2})  hold, $F\in C^{2,\eta}_p(H_1\times H_2,H_1)$ and $G\in C^{2,\eta}_b(H_1\times H_2,H_2)$ with $\eta>0$. Then for any  $q\geq 1$ and $\gamma\in[0,\theta\wedge1/2)$,  we have
	\begin{align}\label{rat1}
	\sup\limits_{t\in[0,T]}\mE\|X^{\eps}_t-\bar X_t\|_{(-A)^\gamma}^q\leq C_1\,\eps^{\frac{q}{2}},
	\end{align}
	where $\bar X_t$ solves equation (\ref{spde2}), and  $C_1=C(T,x,y)>0$ is a constant independent of $\eta$ and $\eps$.
	\et

To compare our result with previous work in the literature, we make the following comments:

\br\label{r1}
(i) When $\gamma=0$ in (\ref{rat1}), the $1/2$-order rate of convergence in the $L^2(\Omega)$-sense is known to be optimal, which is the same as in the SDE case. However, the convergence in $\|\cdot\|_{(-A)^\gamma}$ norm seems to have never been studied before. This is particularly interesting for SPDEs since $A$ is in general an unbounded operator, and will play an important role below to study the homogenization for $Z_t^\eps$.

\vspace{1mm}
(ii) Note that the coefficients are assumed to be only $\eta$-H\"older continuous with respect to the fast variable, and the convergence rate does not dependent on $\eta$. This indicates that the convergence in the averaging principle does not depend on the regularity of the coefficients with respect to the fast variable, which coincides  with the intuition, since in the limit equation the fast component has been totally averaged out.

\vspace{1mm}
(iii) Let us explain why $\gamma<1/2$ should be the best possible. In fact, the main reason is that the processes $X_t^\eps$ and $Y_t^\eps$ are only $\gamma$-H\"older continuous with respect to the time variable with $\gamma<1/2$. From another point of view, for $Z_t^\eps$ given by (\ref{zt}), estimate (\ref{rat1}) means that for every $t\geq 0$, we have
 $$
\sup_{\eps\in(0,1)} \mE\|(-A)^\gamma Z_t^\eps\|_1^2<\infty.
 $$
But by Theorem \ref{main3} below, we have that $Z_t^\eps$ converges to $\bar Z_t$ with $\bar Z_t$ satisfying (\ref{spdez}). Through straightforward computations we find that  $\mE\|(-A)^\gamma \bar Z_t\|_1^2<\infty$ only when $\gamma<1/2$.
\er

Recall that $Z_t^\eps$ is defined by (\ref{zt}).
To study the homogenization for $Z_t^\eps$, we need to consider the following Poisson equation:
\begin{align}\label{poF}
\cL_2(x,y)\Psi(x,y)=-\delta F(x,y),
\end{align}
where $\delta F$ is given by (\ref{dF}), and $\cL_2(x,y)$ is defined by
\begin{align}\label{L2}
\cL_2\varphi(x,y)&:=\cL_2(x,y)\varphi(x,y):=\langle By+G(x,y), D_y\varphi(x,y)\rangle_2\no\\&
\quad+\frac{1}{2}Tr\big[D^2_{y}\varphi(x,y)Q_2\big],
\quad\forall\varphi\in C_p^{0,2}(H_1\times H_2).
\end{align}
According to Theorem \ref{PP} and Remark \ref{PPR} below, there exists a unique solution $\Psi$ to equation (\ref{poF}).
It turns out that the limit  $\bar Z_t$ of $Z_t^\eps$  satisfies the following linear equation:
\begin{align}\label{spdez}
\dif \bar Z_t=A\bar Z_t\dif t+D_x\bar F(\bar X_t).\bar Z_t\dif t+\sigma(\bar X_t)\dif \tilde W_t,\quad \bar Z_0=0,
\end{align}
where   $\tilde W_t$ is a cylindrical  Wiener process in $H_1$ which is independent of $W_t^1$, and $\sigma:H_1\to \sL(H_1)$ satisfies
\begin{align}\label{sst}
\frac{1}{2}\sigma(x)\sigma^*(x)=\overline{\delta F\otimes\Psi}(x):=\int_{H_2}\big[\delta F(x,y)\otimes\Psi(x,y)\big]\mu^x(\dif y).
\end{align}

The following is the second main result of this paper.

\bt[Normal deviations]\label{main3}
 Let $T>0$, $x\in \cD((-A)^\theta)$ and $y\in \cD((-B)^\theta)$ with $\theta>0$. Assume that  ({\bf A1}) and ({\bf A2}) hold,  $F\in C^{2,\eta}_p(H_1\times H_2,H_1)$ and $G\in C^{2,\eta}_b(H_1\times H_2,H_2)$ with $\eta>0$. Then for any $\gamma\in(0,1/2)$   and $\varphi\in \mC_b^4(H_1)$,   we have
\begin{align*}
\sup_{t\in[0,T]}\big|\mE[\varphi(Z_t^{\eps})]-\mE[\varphi(\bar Z_{t})]\big|\leq C_2\,\eps^{\frac{1}{2}-\gamma},
\end{align*}
where  $C_2=C(T,x,y,\varphi)>0$ is a constant independent of $\eta$ and $\eps$.
\et

\br
Note that we claim that $\tilde W_t$ in (\ref{spdez}) is independent of $W_t^1$. The advantage of formula (\ref{sst}) is that we can study the regularity properties of $\sigma$ directly by using the result of the Poisson equation established in Theorem \ref{PP} below. Furthermore, one can check that $\sigma(x)$ is a Hilbert-Schmidt operator.  In fact, by Theorem \ref{PP} we have
\begin{align*}
\|\sigma(x)\|_{\sL_2(H_1)}^2
&=\sum_{n\in\mN}\<\sigma(x)\sigma^*(x)e_{1,n},e_{1,n}\>_1\no\\
&=2\sum_{n\in\mN}\<\int_{H_2}\big[\delta F(x,y)\otimes\Psi(x,y)\big]\mu^x(\dif y)e_{1,n},e_{1,n}\>_1\no\\
&=2\int_{H_2}\<\delta F(x,y),\Psi(x,y)\>_1\mu^x(\dif y)\no\\
&\leq C_0\int_{H_2}(1+\|y\|_2^{2p})\mu^x(\dif y)< \infty.
\end{align*}
Thus, the stochastic integral part in (\ref{spdez}) is well-defined.
\er

\section{Poisson Equation in Hilbert space}

Consider the following Poisson equation in the infinite dimensional Hilbert space $H_2$:
\begin{align}\label{pois}
\cL_2(x,y)\psi(x,y)=-\phi(x,y),
\end{align}
where $\cL_2(x,y)$ is defined by (\ref{L2}), $x\in H_1$ is regarded as a parameter, and $\phi: H_1\times H_2\rightarrow \mR$ is a Borel-measurable function.
Recall that $Y_t^x(y)$  satisfies the frozen equation (\ref{froz}) and $\mu^x(\dif y)$ is the   invariant measure of  $Y_t^x(y)$ (see Lemma \ref{long} below).
Since we are considering (\ref{pois}) on the whole space and not on a compact subset, it is necessary to make the following ``centering" assumption  on $\phi$:
\begin{align}\label{cen}
\int_{H_2}\phi(x,y)\mu^x(\dif y)=0,\quad\forall x\in H_1.
\end{align}
Such kind of assumption is also  natural and analogous to the centering condition in the standard central limit theorem, see e.g. \cite{P-V,P-V2}.

We first introduce the following definition of solutions for equation (\ref{pois}).

\bd
A measurable function $\psi:  H_1\times H_2\rightarrow \mR$ is said to be a classical solution to  equation (\ref{pois}) if:

\vspace{2mm}
\noindent
(i) the function $\psi(x,y)\in C^{0,2}_p(H_1\times H_2)$ and for any $(x,y)\in H_1\times H_2$, the operator $D^2_y\psi(x,y)\in \sL(H_2)$;

\vspace{1mm}
\noindent
(ii) for any $x\in H_1$ and $y\in \cD(B)$, the function $\psi$ satisfies equation (\ref{pois}).
\ed

The main aim of this section is to prove the following result.

\bt\label{PP}
Let $\eta>0$ and $k=0,1,2$. Assume that {\bf (A1)} and  {\bf (A2)}  hold, and $G\in C_b^{k,\eta}(H_1\times H_2, H_2)$. Then for every $\phi\in C^{k,\eta}_p(H_1\times H_2)$ satisfying (\ref{cen}),
there exists  a  unique classical solution $\psi\in C^{k,0}_p(H_1\times H_2)\cap \mC^{0,2}_p(H_1\times H_2)$ to equation (\ref{pois}) satisfying (\ref{cen}), which is given by
\begin{align}\label{pro}
\psi(x,y)=\int_0^\infty\!\mE\big[\phi(x,Y_t^x(y))\big]\dif t,
\end{align}
where $Y_t^x(y)$ satisfies the frozen equation (\ref{froz}).
\et

\br\label{PPR}
We can also solve the Poisson equation (\ref{pois}) for  Hilbert space valued function $\tilde\phi$, i.e., $\tilde\phi: H_1\times H_2\to H$ with $H$ being another Hilbert space. In fact, let $\{e_n\}_{n\in\mN}$ be the orthonormal basis of $H$, and define
$$
\phi_n(x,y):=\<\tilde\phi(x,y),e_n\>_H.
$$
Then for each $n\in\mN$, we have $\phi_n: H_1\times H_2\to \mR$, and thus there exists  a  solution $\psi_n: H_1\times H_2\to \mR$ to the equation (\ref{pois}) with $\phi$ replaced by $\phi_n$. Define a $H$-valued function by
$$
\tilde\psi(x,y):=\sum_{n\in\mN}\psi_n(x,y)e_n=\int_0^\infty\mE[\tilde\phi(x,Y_t^x(y))]\dif t.
$$
Then $\tilde\psi$ solves
\begin{align*}
\cL_2(x,y)\tilde\psi(x,y)=-\tilde\phi(x,y).
\end{align*}

\er
\subsection{Properties of the frozen transition semigroup}

Given $\phi: H_1\times H_2\to \mR$, let
$$
T_t\phi(x,y):=\mE\big[\phi(x,Y_t^x(y))\big],
$$
In view of (\ref{pro}), we need to study the behavior of $T_t\phi$ as well as its first and second order derivatives with respect to the $y$ variable both near $t =0$ and as $t\to\infty$.
Let us first collect the following estimates for $Y_t^x(y)$.

\bl\label{long}
Assume {\bf (A1)} and {\bf (A2)}   hold, and that $G\in C_b^{0,\eta}(H_1\times H_2, H_2)$. Then there exists a unique mild solution $Y_t^x(y)$ to the equation (\ref{froz}). Moreover, we have:

\vspace{2mm}
\noindent
(i) For any $t\geq 0$ and $q\geq 1$, there exist  constants $C_q, \lambda>0$ such that
\begin{align}\label{ytxy}
\mE\|Y_{t}^{x}(y)\|_2^q\leq C_q\big(1+e^{-\lambda  t}\|y\|_2^q\big);
\end{align}
(ii) $Y_t^x(y)$ is strong Feller and irreducible;

\vspace{1mm}
\noindent
(iii) There exist constants  $C_0,\lambda>0$ such that for any $t\geq 0$ and every $\phi\in L^\infty_p(H_1\times H_2)$,
\begin{align}\label{exp}
\left\vert T_t\phi(x,y)-\int_{H_2}\phi(x,z)\mu^x(\dif z)\right\vert\leq C_0\|\phi\|_{L^\infty_p}(1+\|x\|_1+\|y\|_2^p)e^{-\lambda t}.
\end{align}
\el

\begin{proof}

The existence and uniqueness of solutions to SPDE (\ref{froz}) with  H\"older continuous coefficients follows from \cite[Theorem 7]{DF}. We only need to verify that the assumptions 4, 5, 6 in \cite{DF} hold. To this end, let
$$
Q_t:=\int_0^te^{sB}Q_2e^{sB^*}\dif s\quad \text{and} \quad\Lambda_t=Q_t^{-1/2}e^{tB}.
$$
Then under the assumption ({\bf A2}) we have
$$
Tr(Q_t)=\sum\limits_{n\in\mN}\frac{\lambda_{2,n}}{2\beta_n}(1-e^{-2\beta_nt})
\leq\sum\limits_{n\in\mN}\frac{\lambda_{2,n}}{2\beta_1}\leq C_0Tr(Q_2)<+\infty.
$$
Note that
$$
\|\Lambda_t\|_{\sL(H_2)}^2=\sup_{n\in\mN}\frac{2\beta_n}{\lambda_{2,n}(e^{2\beta_nt}-1)}.
$$
Thus, we have 
\begin{align*}
\int_0^T\|\Lambda_t\|_{\sL(H_2)}^{1+\theta}\dif t<\infty,\quad\text{for}\; \text{some}\; \theta\geq \max{(\eta,1-\eta)},
\end{align*}
which implies the desired result. Meanwhile, estimate (\ref{ytxy}) can be proved by following the same argument as in \cite[Theorem 7.3]{Ce4}, and the conclusions in (ii) follow by \cite[Theorem 4 and Proposition 4]{C-G}. Furthermore, for any $t\in [0,T]$ one can check that there exists a $\theta>0$ such that
$$
{\mE}\|Y^{x}_t(y)\|_{(-B)^\theta}\leq C_T\big(1+\|y\|_2^p\big).
$$
For any $r, R>0$, let $B_r:=\{y\in H_2: \|y\|_2\leq r\}$ and $K=\{y\in H_2: \|y\|_{(-B)^\theta}\leq R\}$. Then we have that for $R$ large enough,
\begin{align*}
\inf_{y\in B_r}{\mP}(Y^{x}_{T}(y)\in K)&=\inf_{y\in B_r}{\mP}(\|Y^{x}_{T}(y)\|_{(-B)^\theta}\leq R)\\
&=1-\sup_{y\in B_r}{\mP}(\|Y^{x}_{T}(y)\|_{(-B)^\theta}>R)\\
&\geq 1-\sup_{y\in B_r}\frac{{\mE}\|Y^{x,y}_T\|_{(-B)^\theta}}{R}
\geq 1-\frac{C_T(1+r^p)}{R}>0.
\end{align*}
	Thus, estimate (\ref{exp}) follows by  \cite[Theorem 2.5]{GM}.
\end{proof}

Let $P_t$ be the Ornstein-Uhlenbeck semigroup  defined by
$$
P_t\phi(x,y):=\mE\big[\phi(x,R_t(y))\big],
$$
where
$$
\dif R_t=BR_t\dif t+\dif W_t^2,\quad R_0=y\in H_2.
$$
The following result was proved by \cite[Theorem 4]{DF} if $\phi$ is bounded and measurable. In view of (\ref{ytxy}), we can generalize it to   $\phi$ with polynomial growth  by following exactly the same argument. We omit the details here.

\bl\label{pt}
Assume {\bf (A1)} and  {\bf (A2)}   hold. Then for every $\phi\in L^\infty_p(H_1\times H_2)$ and $t\in(0,T]$, we have $P_t\phi(x,y)\in \mC_p^{0,2}(H_1\times H_2)$. Moreover,
\begin{align}\label{rty}
\|D_yP_t\phi(x,y)\|_2\leq C_T\frac{1}{\sqrt{t}}\|\phi\|_{L^\infty_p}(1+\|x\|_1+\|y\|_2^p),
\end{align}
and for any $\eta\in[0,1]$,
\begin{align}\label{rty2}
\|D^2_yP_t\phi(x,y)\|_{\sL(H_2)}\leq C_T\frac{1}{t^{1-\eta/2}}\|\phi\|_{C^{0,\eta}_p}(1+\|x\|_1+\|y\|_2^p),
\end{align}
	where  $C_T>0$ is a  constant.
\el

Based on Lemma \ref{pt}, we  have the following result.

\bl
Assume {\bf (A1)} and {\bf (A2)}  hold, and that $G\in C_b^{0,\eta}(H_1\times H_2, H_2)$. Then for every $\phi\in L^\infty_p(H_1\times H_2)$ satisfying (\ref{cen}),  we have $T_t\phi(x,y)\in \mC_p^{0,1}(H_1\times H_2)$ with
\begin{align}\label{tt}
|T_t\phi(x,y)|\leq C_0\|\phi\|_{L^\infty_p}(1+\|x\|_1+\|y\|_2^p)e^{-\lambda t}
\end{align}
and
\begin{align}\label{tty}
\|D_yT_t\phi(x,y)\|_2\leq C_0\frac{1}{\sqrt{t}\wedge1}\|\phi\|_{L^\infty_p}(1+\|x\|_1+\|y\|_2^p)e^{-\lambda t},
\end{align}
where  $C_0, \lambda>0$ are  constants independent of $t$. If we further assume that $\phi\in C^{0,\eta}_p(H_1\times H_2)$ with $\eta\in(0,1)$, then   $T_t\phi(x,y)\in \mC_p^{0,2}(H_1\times H_2)$ and
\begin{align}\label{tty2}
\|D^2_yT_t\phi(x,y)\|_{\sL(H_2)}\leq C_0\frac{1}{t^{1-\eta/2}\wedge1}\|\phi\|_{C^{0,\eta}_p}(1+\|x\|_1+\|y\|_2^p)e^{-\lambda t}.
\end{align}
\el
\begin{proof}
Estimate (\ref{tt}) follows by (\ref{exp}) directly. The assertions that $T_t\phi(x,y)\in \mC_p^{0,1}(H_1\times H_2)$ and $T_t\phi(x,y)\in \mC_p^{0,2}(H_1\times H_2)$ can be obtained as in \cite[Theorem 5]{DF}. Let us focus on the a-priori estimates  (\ref{tty}) and (\ref{tty2}).
By Duhamel's formula (see e.g. \cite[(16)]{DF}), for any $t>0$ we have
\begin{align*}
T_t\phi(x,y)=P_t\phi(x,y)+\int_0^tP_{t-s}\<G,D_yT_s\phi\>_2(x,y)\dif s.
\end{align*}
In view of (\ref{rty}) and by the assumption that $G$ is bounded, we have for every $t\in(0,T]$,
\begin{align*}
\|D_yT_t\phi(x,y)\|_2\leq C_0(1+\|x\|_1&+\|y\|_2^p)\bigg(\frac{1}{\sqrt{t}}\|\phi\|_{L^\infty_p}\\
&+\int_0^t\frac{1}{\sqrt{t-s}}\|D_yT_s\phi(x,y)\|_2\dif s\bigg).
\end{align*}
By Gronwall's inequality  we  obtain
\begin{align}\label{es11}
\|D_yT_t\phi(x,y)\|_2\leq C_0\frac{1}{\sqrt{t}}\|\phi\|_{L^\infty_p}(1+\|x\|_1+\|y\|_2^p),
\end{align}
which means that (\ref{tty}) is true for $t\leq 2$. For $t>2$, by the Markov property we have
$$
T_t\phi(x,y)=\mE \big[T_{t-1}\phi(x, Y_1^x(y))\big].
$$
Using (\ref{tt}) and  (\ref{es11}) with $t=1$ and $\phi$ replaced by $T_{t-1}\phi$, we deduce that
\begin{align*}
\|D_yT_t\phi(x,y)\|_2&\leq C_1\|T_{t-1}\phi\|_{L^\infty_p}(1+\|x\|_1+\|y\|_2^p)\\
&\leq C_1e^{-\lambda(t-1)}\|\phi\|_{L^\infty_p}(1+\|x\|_1+\|y\|_2^p).
\end{align*}
To prove (\ref{tty2}), we first note that by (\ref{rty}), (\ref{rty2}) and interpolation, we have that for any $\eta\in(0,1)$,
\begin{align*}
\|D_yP_t\phi\|_{C_p^{0,\eta}}\leq C_0\frac{1}{t^{(1+\eta)/2}}\|\phi\|_{L^\infty_p}.
\end{align*}
Thus, we derive that
\begin{align*}
\|D_yT_t\phi\|_{C_p^{0,\eta}}&\leq C_0\bigg(\frac{1}{t^{(1+\eta)/2}}\|\phi\|_{L^\infty_p}+\int_0^t\frac{1}{(t-s)^{(1+\eta)/2}}\|D_yT_s\phi\|_{L_p^{\infty}}\dif s\bigg)\\
&\leq C_0\frac{1}{t^{(1+\eta)/2}\wedge1}\|\phi\|_{L^\infty_p}.
\end{align*}
Combining this with (\ref{rty2}) and the assumption that $G\in C_b^{0,\eta}(H_1\times H_2)$, we get for $k_1,k_2\in H_2$,
\begin{align*}
|D^2_yT_t\phi(x,y).(k_1,k_2)|\leq C_2\frac{1}{t^{(2-\eta)/2}\wedge1}\|\phi\|_{C^{0,\eta}_p}(1+\|x\|_1+\|y\|_2^p)\|k_1\|_2\|k_2\|_2,
\end{align*}
which means that (\ref{tty2}) holds for $t\leq 2$. Following the same ideas as above, we obtain that (\ref{tty2}) holds for $t>2$.
\end{proof}

\subsection{Proof of Theorem \ref{PP}}

\begin{proof}[Proof of Theorem \ref{PP}]
We divide the proof into three steps.

\vspace{1mm}
\noindent{\bf Step 1.}
Let $\psi(x,y)$ be defined by (\ref{pro}). We first prove that $\psi\in \mC_p^{0,2}(H_1\times H_2)$. In fact, for every $\phi\in L^\infty_p(H_1\times H_2)$ satisfying (\ref{cen}), by (\ref{tt}) we deduce that
\begin{align}\label{dydy1}
|\psi(x,y)|&\leq \int_0^\infty |T_t\phi(x,y)|\dif t\leq C_0\|\phi\|_{L^\infty_p}\int_0^\infty (1+\|x\|_1+\|y\|_2^p)e^{-\lambda t}\dif t\no\\
&\leq C_0\|\phi\|_{L^\infty_p}(1+\|x\|_1+\|y\|_2^p),
\end{align}
and by (\ref{tty}) we have for every $k_1\in H_2$,
\begin{align}
|\<D_y\psi(x,y),k_1\>_2|&\leq \int_0^\infty |\<D_yT_t\phi(x,y),k_1\>_2|\dif t\no\\
&\leq C_1\|\phi\|_{L^\infty_p}\int_0^\infty \frac{1}{\sqrt{t}\wedge1}(1+\|x\|_1+\|y\|_2^p)\|k_1\|_2\,e^{-\lambda t}\dif t\no\\
&\leq C_1\|\phi\|_{L^\infty_p}(1+\|x\|_1+\|y\|_2^p)\|k_1\|_2.\label{dydy}
\end{align}
Furthermore, by the dominated convergence theorem we deduce that
\begin{align*}
&\lim_{\|k_1\|_2\to0}\frac{|\psi(x,y+k_1)-\psi(x,y)-\<D_y\psi(x,y),k_1\>_2|}{\|k_1\|_2}\\
&\leq\lim_{\|k_1\|_2\to0}\int_0^\infty \frac{|T_t\phi(x,y+k_1)-T_t\phi(x,y)-\<D_yT_t\phi(x,y),k_1\>_2|}{\|k_1\|_2}\dif t=0.
\end{align*}
Similarly, by using (\ref{tty2})  we can prove that $\psi\in \mC_p^{0,2}(H_1\times H_2)$ and for every $k_1, k_2\in H_2$,
\begin{align}\label{dydy2}
|D^2_y\psi(x,y).(k_1,k_2)|\leq C_2\|\phi\|_{C^{0,\eta}_p}(1+\|x\|_1+\|y\|_2^p)\|k_1\|_2\|k_2\|_2.
\end{align}
Here, we remark that the control of $\psi$ and $D_y\psi$ depends only on the $\|\cdot\|_{L_p^\infty}$-norm of the function $\phi$. In addition, by Fubini's theorem and the property of the invariant measure, we have
\begin{align*}
\int_{H_2}\psi(x,y)\mu^x(\dif y)&=\int_0^\infty\!\! \int_{H_2}T_t\phi(x,y)\mu^x(\dif y)\dif t\\
&=\int_0^\infty\!\! \int_{H_2}\phi(x,y)\mu^x(\dif y)\dif t=0.
\end{align*}
Thus, the assertion  that $\psi$ is the unique solution for equation (\ref{pois}) follows by It\^o's formula, see e.g. \cite[Lemma 4.3]{Br1}.

\vspace{1mm}

\noindent{\bf Step 2.} When $k=1$ in the assumptions, we  prove  that  $\psi(x,y)\in C^{1,0}_p(H_1\times H_2)$. In fact, for every $h_1\in H_1$ and $\tau>0$,  we have
\begin{align*}
\cL_2(x,y)&\frac{\psi(x+\tau h_1,y)-\psi(x,y)}{\tau}=-\frac{\phi(x+\tau h_1,y)-\phi(x,y)}{\tau}\\
&-\frac{\<G(x+\tau h_1,y)-G(x,y),D_y\psi(x+\tau h_1,y)\>_2}{\tau}=:-\phi_{\tau,h_1}(x,y).
\end{align*}
By the assumptions on $\phi$ and $G$, and using estimates (\ref{dydy}) and (\ref{dydy2}), one can check that $\phi_{\tau,h_1}(x,y)\in C_p^{0,\eta}(H_1\times H_2)$. We claim that
\begin{align}\label{ccen}
\int_{H_2}\phi_{\tau,h_1}(x,y)\mu^x(\dif y)=0,\quad\forall \tau>0, x, h_1\in H_1.
\end{align}
Then, according to Step 1, we obtain that for every $\tau>0$,
\begin{align}\label{diff}
\frac{\psi(x+\tau h_1,y)-\psi(x,y)}{\tau}=\int_0^\infty\mE\phi_{\tau,h_1}(x,Y_t^{x}(y))\dif t.
\end{align}
Note that
\begin{align*}
\lim_{\tau\to0}\phi_{\tau,h_1}(x,y)=\<D_x\phi(x,y),h_1\>_1+\langle D_x G(x,y). h_1,D_y\psi(x,y)\rangle_2=:\phi_{h_1}(x,y).
\end{align*}
Using the assumption that $G\in C_b^{1,0}(H_1\times H_2)$ and (\ref{dydy}), we find that
\begin{align}\label{dom6}
\vert \phi_{h_1}(x,y)\vert&\leq \|D_x\phi(x,y)\|_1\|h_1\|_1+\|D_x G(x,y)\|_{\sL(H_1,H_2)}\|h_1\|_1\|D_y\psi(x,y)\|_2\no\\
&\leq C_3(1+\|x\|_1+\|y\|_2^p)\|h_1\|_1.
\end{align}
Thus, by the dominated convergence theorem, we obtain
\begin{align*}
\int_{H_2}\phi_{h_1}(x,y)\mu^x(\dif y)=0,\quad\forall x, h_1\in H_1.
\end{align*}
Combining this with (\ref{tt}), we have
\begin{align*}
|T_t\phi_{\tau,h_1}(x,y)|&\leq C_4\|\phi_{\tau,h_1}\|_{L^\infty_p}(1+\|x\|_1+\|y\|_2^p)e^{-\lambda t}\\
&\leq C_4\big(1+\|\phi_{h_1}\|_{L^\infty_p}\big)(1+\|x\|_1+\|y\|_2^p)e^{-\lambda t}.
\end{align*}
As a result, taking the limit $\tau\to 0$ on both sides of (\ref{diff}) we get
\begin{align}\label{dx1}
\<D_x\psi(x,y),h_1\>_1=\int_0^\infty\mE\phi_{h_1}(x,Y_s^{x}(y))\dif s.
\end{align}

It remains to prove (\ref{ccen}). To this end, for every $n\in\mN$, let $H_2^n:= span\{e_{2,k};1\leq k\leq n\}$ and denote the orthogonal projection of $H_2$ onto $H_2^n$ by $P_{2}^n$. We introduce the following approximation of system (\ref{froz}):
\begin{align}\label{froz2}
\dif Y^{x,n}_t =B_nY^{x,n}_t\dif t+ G_n(x, Y^{x,n}_t)\dif t+ P_{2}^n\dif W_t^2,\;\;Y^{x,n}_0=P_2^ny\in H_2^n.
\end{align}
 where for $(x,y)\in H_1\times H_2$,
\begin{align}\label{gn}
B_ny:=BP_2^ny\quad\text{and}\quad G_n(x,y):=P_{2}^nG(x,P_2^ny).
\end{align}
It is easy to check that  $G_n$ is uniformly bounded with respect to $n$. Thus the solution to equation (\ref{froz2}) has the same long-time behavior as the one to equation (\ref{froz}). Let $\mu^x_n(\dif y)$ be the invariant measure for $Y_t^{x,n}$, and define $\cL^n_2(x,y)$  by
\begin{align*}
\cL^n_2(x,y)\varphi(x,y)&:=\<B_ny+ G_n(x,y),D_y\varphi(x,y))\>_2\\
&\quad\,\,+\frac{1}{2}Tr\big[D_y^2\varphi(x,y)Q_{2,n}\big],\quad\forall\varphi\in C^{0,2}_p(H_1\times H_2^n),
\end{align*}
where $Q_{2,n}:=Q_2P_2^n$.
Consider the Poisson equation corresponding to (\ref{froz2}):
\begin{align}\label{pois22}
\cL_2^n(x,y)\psi^n(x,y)=-\phi(x,P_2^ny)=:-\phi^n(x,y).
\end{align}
As in Step 1, the unique solution is given by
\begin{align*}
\psi^n(x,y)=\int_0^\infty\!\mE\big[\phi^n(x,Y_t^{x,n}(y))\big]\dif t.
\end{align*}
According to \cite[Subsection 4.1]{Br1} (see also \cite[Section 6]{CF}), we have  for every $x\in H_1$ and $y\in H_2$,
\begin{align}\label{conv}
\lim_{n\to\infty}\psi^n(x,y)=\psi(x,y),\quad\lim_{n\to\infty}\<D_y\psi^n(x,y),k_2\>_2=\<D_y\psi(x,y),k_2\>_2.
\end{align}
For every $\tau>0$ and $h_1\in H_1$, define
\begin{align*}
\phi^n_{\tau,h_1}(x,y)&:=\big[\phi^n(x+\tau h_1,y)-\phi^n(x,y)\big]\\
&\quad\,\,+\<G_n(x+\tau h_1,y)-G_n(x,y),D_y\psi^n(x+\tau h_1,y)\>_2.
\end{align*}
Since (\ref{pois22}) is an equation in finite dimensions, according to \cite[Lemma 3.2]{RX} we have
\begin{align*}
\int_{H_2^n} \phi^n_{\tau,h_1}(x,y) \mu^x_n(\dif y)=0.
\end{align*}
Using estimates (\ref{dom6}), (\ref{conv}), the formula above \cite[(4.4)]{Br1} and taking the limit $n\to \infty$ on both sides of the above equality, we obtain (\ref{ccen}).

\vspace{1mm}
\noindent{\bf Step 3.} When $k=2$ in the assumptions, we prove  that  $\psi(x,y)\in C^{2,0}_p(H_1\times H_2)$. In this case, we mainly focus on the a-priori estimate. The specific procedure can be done as in Step 2. In view of (\ref{dx1}) and according to the results in Step 1, we can conclude that $\<D_x\psi(x,y),h_1\>_1\in \mC_p^{0,2}(H_1\times H_2)$ with
\begin{align*}
|\<D_x\psi(x,y),h_1\>_1|&\leq C_5\|\phi_{h_1}\|_{L^\infty_p}(1+\|x\|_1+\|y\|_2^p)\\
&\leq C_5(1+\|x\|_1+\|y\|_2^p)\|h_1\|_1,
\end{align*}
and
\begin{align}\label{dyx}
|D_yD_x\psi(x,y).(h_1,k)|&\leq C_5\|\phi_{h_1}\|_{L^\infty_p}(1+\|x\|_1+\|y\|_2^p)\|k\|_2\no\\
&\leq C_5 (1+\|x\|_1+\|y\|_2^p)\|h_1\|_1\|k\|_2.
\end{align}
Moreover, we have
\begin{align}\label{d1}
{\mathcal{L}_2}(x,y)\<D_x\psi(x,y),h_1\>_1=-\phi_{h_1}(x,y),
\end{align}
Since $\<D_x\psi(x,y),h_1\>_1$ is a classical solution,
by taking derivative with respect to the $x$ variable on both sides of the equation, we have that for any $h_1, h_2\in H_1$,
\begin{align*}
&{\mathcal{L}_2}(x,y)(D^2_{x}\psi(x,y).(h_1,h_2))=-D^2_{x}\phi(x,y).(h_1,h_2)\\
&\qquad-2D_yD_x\psi(x,y).(h_2,D_x G(x,y).h_1)\\
&\qquad-\langle D^2_{x}G(x,y).(h_1,h_2),D_y\psi(x,y)\rangle_2=:-\phi_{h_1,h_2}(x,y).
\end{align*}
By the assumption that $G\in C_b^{2,0}(H_1\times H_2)$ and (\ref{dyx}), we get
\begin{align*}
\vert \phi_{h_1,h_2}(x,y)\vert&\leq\|D^2_{x}\phi(x,y)\|_{\sL(H_1\times H_1)}\|h_1\|_1\|h_2\|_1\\
&\quad+2\vert D_yD_x\psi(x,y).(h_2,D_xG(x,y).h_1)\vert\\
&\quad+\vert\langle D^2_{x}G(x,y).(h_1,h_2),D_y\psi(x,y)\rangle_2\vert\\
&\leq C_6(1+\|x\|_1+\|y\|_2^p)\|h_1\|_1\|h_2\|_1.
\end{align*}
Furthermore,  by using \cite[Lemma 3.2]{RX} again and the same approximation argument as in Step 2, we have that $\phi_{h_1,h_2}(x,y)$ satisfies the centering condition
\begin{align}\label{c2}
\int_{H_2}\phi_{h_1,h_2}(x,y)\mu^x(\dif y)=0.
\end{align}
Thus, in view of (\ref{dydy1}) and (\ref{dydy}) we can get that $(D^2_{x}\psi(x,y).(h_1,h_2))\in \mC_p^{0,2}(H_1\times H_2)$ with
\begin{align*}
|D^2_{x}\psi(x,y).(h_1,h_2)|&\leq C_7\|\phi_{h_1,h_2}\|_{L^\infty_p}(1+\|x\|_1+\|y\|_2^p)\\
&\leq C_7(1+\|x\|_1+\|y\|_2^p)\|h_1\|_1\|h_2\|_1.
\end{align*}
The proof is finished.
\end{proof}

Given a function $F(x,y)$, recall that $\bar{F}$ is defined by (\ref{df1}). Usually, it is not so easy to study the regularity of the averaged function, which contains a separate problem connected with the smoothness of the invariant measure $\mu^x(\dif y)$.  Here, we provide a simple method by using Theorem \ref{PP}.

\bl\label{aveF}
Assume that $F\in C_p^{1,\eta}(H_1\times H_2)$ with $\eta>0$, and let $\Psi(x,y)$ solve the Poisson equation (\ref{poF}). Then for any $h_1\in H_1$, we have
\begin{align}\label{trans1}
D_x\bar F(x).h_1=\!\int_{H_2}\!\!\Big[D_x F(x,y).h_1\!+\!\<D_xG(x,y).h_1,D_y\Psi(x,y)\>_2\Big]\mu^x(\dif y).
\end{align}
Furthermore, assume that $F\in C_p^{2,\eta}(H_1\times H_2)$, then we have for any $h_1,h_2\in H_1$,
\begin{align}\label{trans2}
D^2_x\bar F(x).(h_1,h_2)&=\!\int_{H_2}\!\!\Big[D_x^{2} F(x,y).(h_1,h_2)\!+\!2D_yD_x\Psi(x,y).(h_2,D_xG(x,y).h_1)\no\\
&\quad\qquad+\langle D^2_{x}G(x,y).(h_1,h_2),D_y \Psi(x,y)\rangle_2\Big]\mu^x(\dif y).
\end{align}
In particular,  we have
\begin{align}\label{fff}
\| D_x\bar F(x).h_1\|_1\leq C_0\|h_1\|_1,\quad
\| D_x^2\bar F(x).(h_1,h_2)\|_1\leq C_0\| h_1\|_1\| h_2\|_1,
\end{align}
where $C_0>0$ is a constant.
\el

\br
The interesting point in formula (\ref{trans1}) (and also in (\ref{trans2})) lies in that the regularity of the averaged function $\bar F$ with respect to the $x$-variable is transferred to the regularity of the solution $\Psi$ with respect to the $y$-variable. Since $\Psi$ is the solution to the corresponding Poisson equation, we can get the required regularity for free by  the uniform ellipticity property of the generator $\cL_2(x,y)$ as been proven in Theorem \ref{PP}.
\er
\begin{proof}
Recall that
$$
\cL_2(x,y)\Psi(x,y)=-\delta F(x,y)=-(F(x,y)-\bar F(x)).
$$
Note that $\delta F$ satisfies the centering condition (\ref{cen}). As in the proof of  (\ref{d1}), we have
\begin{align*}
{\mathcal{L}_2}(x,y)D_x\Psi(x,y).h_1=-D_x\delta F(x,y).h_1-\<D_xG(x,y). h_1,D_y\Psi(x,y)\>_2.
\end{align*}
Moreover, we have
$$
\int_{H_2}\Big[D_x\delta F(x,y).h_1+\langle D_xG(x,y). h_1,D_y\Psi(x,y)\rangle_2\Big]\mu^x(\dif y)=0.
$$
Note that
\begin{align*}
\int_{H_2}D_x\bar F(x).h_1\mu^x(\dif y)=D_x\bar F(x).h_1,
\end{align*}	
hence we get (\ref{trans1}). Similarly, as in (\ref{c2}) we have
\begin{align*}
&\int_{H_2}\Big[D^2_{x}\delta F(x,y).(h_1,h_2)
+2D_yD_x\Psi(x,y).(h_2,D_xG(x,y).h_1)\\
&\qquad\quad+\langle D^2_{x}G(x,y).(h_1,h_2),D_y\Psi(x,y)\rangle_2\Big]\mu^x(\dif y)=0,
\end{align*}
which in turn yields (\ref{trans2}).
Finally, due to the fact that for any $p\geq 1$,
$$
\int_{H_2}\big(1+\|y\|_2\big)^p\mu^x(\dif y)<\infty,
$$
we get estimate (\ref{fff}).
	\end{proof}

\section{Strong convergence in the averaging principle}

\subsection{Galerkin approximation}

It\^o's formula will be used frequently below in the proof of the main result. However, due to the presence of unbounded operators in the equation, we can not apply It\^o's formula for SPDE (\ref{spde1}) directly. For this reason, we introduce the following Galerkin approximation scheme.

For $n\in\mN$, let $H_1^n:= span\{e_{1,k};1\leq k\leq n\}$ and denote  the orthogonal projection of $H_1$ onto $H_1^n$ by $P_{1}^n$. Recall that $G_n(x,y)$ is defined by (\ref{gn}), and for $(x,y)\in H^n_1\times H^n_2$, define
$F_n(x,y):=P_{1}^nF(x,y)$.
We reduce the infinite dimensional system (\ref{spde1}) to the following finite dimensional system in $H_1^n\times H_2^n$:
\begin{equation}\label{xyz}
\left\{ \begin{aligned}
&\dif X^{n,\eps}_t =AX^{n,\eps}_t\dif t+F_n(X^{n,\eps}_t, Y^{n,\eps}_t)\dif t+P_{1}^n\dif W^1_t,\\
&\dif Y^{n,\eps}_t =\eps^{-1}BY^{n,\eps}_t\dif t+\eps^{-1}G_n(X^{n,\eps}_t, Y^{n,\eps}_t)\dif t+\eps^{-1/2} P_{2}^n\dif W_t^2,
\end{aligned} \right.
\end{equation}
with initial values $X_0^{n,\eps}=x^n:=P^n_1x\in H_1^n$ and $Y_0^{n,\eps}=y^n:=P^n_2y\in H_2^n$.
It is easy to check that $F_n$ and $G_n$ satisfy the same conditions as $F$ and $G$ with bounds which are uniform with respect to $n$. Thus the equation (\ref{xyz}) is well-posed in $H_1^n\times H_2^n$.
The corresponding averaged equation for system (\ref{xyz}) can be formulated as
\begin{align}\label{spden2}
\dif \bar{X}^n_t=A\bar{X}^n_t\dif t+\bar{F}_n(\bar{X}^n_t)\dif t+P_{1}^{n}\dif W_t^1,\quad \bar{X}^n_0=x^{n}\in H_1^n,
\end{align}
where $\bar{F}_n(x)$ is defined by
\begin{align}\label{Fn}
\bar F_n(x):=\int_{H_2^n}F_n(x,y)\mu^x_n(\dif y).
\end{align}
Note that  $\bar X_t^n$ is not the Galerkin approximation of $\bar X_t$.

The following result states the convergence of the finite dimensional system (\ref{xyz}) to the initial equation (\ref{spde1}).

\bl\label{xyn}
Let $T>0$,  $x\in \cD((-A)^\theta)$ and $y\in\cD((-B)^\theta)$ with $\theta\in[0,1]$. Then for every $q\geq 1$ and $\gamma\in[0,\theta],$  we have for every $t\in[0,T]$,
\begin{align}\label{axn}
\lim\limits_{n\to\infty}\Big(\mE\|(-A)^{\gamma}(X_t^\eps-X_t^{n,\eps})\|_1^q
&+\mE\|Y_t^\eps-Y_t^{n,\eps}\|_2^q\no\\
&+\mE\|(-A)^{\gamma}(\bar X_t-\bar X_t^{n})\|_1^q\Big)=0.
\end{align}
\el
\begin{proof}
When $\theta=0$ (and thus $\gamma=0$), (\ref{axn}) was proven in \cite[Lemma 4.2]{Br1}.
For general $\gamma\in(0,\theta]$,  by Lemma {\ref{la44}} we know that $X_t^\eps\in\cD((-A)^\gamma).$ As a result, we have
\begin{align*}
\mE\Big(\|(-A)^{\gamma}(X_t^\eps-X_t^{n,\eps})\|_1^q\Big)\!=\mE\!\left[\left(\sum\limits_{k=n+1}^{\infty}\alpha_k^{2\gamma}\<X_t^\eps
,e_{1,k}\>_1^2\right)^{q/2}\right]\!\!\to 0\;\;\mbox{as}\;\;n\to\infty.
\end{align*}
Furthermore, by  Lemma \ref{bx} we deduce that
\begin{align*}
\mE\Big(\|(-A)^{\gamma}(\bar X_t-\bar X_t^{n})\|_1^q\Big)&\leq \mE\left(\int_0^t\|(-A)^{\gamma}e^{(t-s)A}[\bar F(\bar X_s)-\bar F_n(\bar X_s)]\|_1\dif s\right)^q\\
&+\mE\bigg\|\int_0^t(-A)^{\gamma}e^{(t-s)A}(I-P_1^n)\dif W_s\bigg\|_1^q\\
&+\mE\!\left(\int_0^t\|(-A)^{\gamma}e^{(t-s)A}[\bar F_n(\bar X_s)-\bar F_n(\bar X^n_s)]\|_1\dif s\right)^q.
\end{align*}
Since $\|\bar F_n-\bar F\|_1\to0$ as $n\to\infty$ (see e.g. \cite[(4.4)]{Br1}), the first two terms go to 0 as $n\to\infty$ by the dominated convergence theorem. For the last term,  we have
\begin{align*}
&\lim\limits_{n\to\infty}\mE\left(\int_0^t\|(-A)^{\gamma}e^{(t-s)A}[\bar F_n(\bar X_s)-\bar F_n(X^n_s)]\|_1\dif s\right)^q\\
&\leq \lim\limits_{n\to\infty}C_1\,\mE\left(\int_0^t(t-s)^{-\gamma}\|\bar X_s-\bar X^n_s\|_1\dif s\right)^q=0,
\end{align*}
which in turn yields the desired result.
\end{proof}

\subsection{Proof of Theorem \ref{main1}}
Let $T>0$,  $x\in \cD((-A)^\theta)$ and $y\in\cD((-B)^\theta)$ with $\theta\in[0,1]$. Note that for any $t\in[0,T]$, $q\geq 1$ and $\gamma\in[0,\theta\wedge1/2)$, we have
\begin{align*}
\mE\|(-A)^\gamma (X_t^{\eps}&-\bar X_t)\|_1^q\leq \mE\|(-A)^\gamma (X_t^{\eps}- X^{n,\eps}_t)\|_1^q\no\\
&+\mE\|(-A)^\gamma (X_t^{n,\eps}-\bar X^n_t)\|_1^q+\mE\|(-A)^\gamma (\bar X^n_t-\bar X_t)\|_1^q.
\end{align*}
By Lemma \ref{xyn}, the first and the last terms on the right-hand side of the above inequality converge to $0$ as $n\to\infty$. Therefore,
in order to prove Theorem {\ref{main1}}, we only need to show that
\begin{align}\label{nxx}
\sup_{t\in[0,T]}\mE\|(-A)^\gamma (X_t^{n,\eps}-\bar X^n_t)\|_1^q\leq C_T\,\eps^{q/2},
\end{align}
where {\bf $C_T>0$ is a constant independent of $n$}.
In the following subsection, we shall only work with the approximation system (\ref{xyz}), and prove
bounds that are uniform with respect to the dimension. But in order to simplify the notations, we omit the index $n$. In particular, for $i=1,2,$ the spaces $H_i^n$ are denoted by $H_i.$

Define
\begin{align}\label{L1}
\cL_1\varphi(x,y)&:=\cL_1(x,y)\varphi(x,y):=\langle Ax+F(x,y), D_x\varphi(x,y)\rangle_1\no\\
&\quad+\frac{1}{2}Tr\big[D_x^2\varphi(x,y)Q_1\big],\quad\forall \varphi\in C_p^{2,0}(H_1\times H_2).
\end{align}
We first establish the following strong fluctuation estimate for an appropriate integral functional of $(X_r^\eps,Y_r^\eps)$ over the time interval $[s,t],$ which will play an important role in proving (\ref{nxx}).

\bl[Strong fluctuation estimate]\label{strf}
Let $T,\theta>0$, $x\in \cD((-A)^\theta)$ and $y\in\cD((-B)^\theta)$. Assume that  ({\bf A1}) and ({\bf A2})  hold, $F\in C^{2,\eta}_p(H_1\times H_2,H_1)$ and $G\in C^{2,\eta}_b(H_1\times H_2,H_2)$ with $\eta>0$. Then for any $\gamma\in[0,\theta\wedge1/2)$, $q\geq 1$, $0\leq s\leq t\leq T$ and $\tilde\phi \in C_p^{2,\eta}(H_1\times H_2, H_1)$ satisfying (\ref{cen}), we have
\begin{align}\label{stfe}
\mE\left\|\int_s^t(-A)^\gamma e^{(t-r)A}\tilde\phi(X_r^\eps,Y_r^\eps)\dif r\right\|_1^q\leq C_{q,\gamma,T}(t-s)^{(\theta-\gamma)}q\,\eps^{q/2},
\end{align}
where $C_{q,\gamma,T}>0$ is a constant.
\el

\begin{proof}
Let $\tilde\psi$  solve the Poisson equation
$$
\cL_2(x,y)\tilde\psi(x,y)=-\tilde \phi(x,y),
$$
and define
\begin{align}\label{psi8}
\tilde\psi_{t,\gamma}(r,x,y):=(-A)^\gamma e^{(t-r)A}\tilde\psi(x,y).
\end{align}
Since $\cL_2$ is an operator with respect to the $y$-variable, one can check that
\begin{align}\label{ppo}
\cL_2(x,y)\tilde\psi_{t,\gamma}(r,x,y)=-(-A)^\gamma e^{(t-r)A}\tilde\phi(x,y).
\end{align}
According to Theorem \ref{PP}, we know that $\tilde \psi\in C_p^{2,0}(H_1\times H_2,H_1)\cap\mC_p^{0,2}(H_1\times H_2,H_1).$ Applying It\^o's formula to $\tilde\psi_{t,\gamma}(t, X_t^\eps,Y_t^{\eps})$ we  get
\begin{align}\label{ito1}
\tilde\psi_{t,\gamma}(t, X_t^\eps,Y_t^{\eps})&=\tilde\psi_{t,\gamma}(s, X_s^\eps,Y_s^{\eps})+\int_s^t (\p_r+\cL_1)\tilde\psi_{t,\gamma}(r,X_r^\eps,Y_r^{\eps})\dif r\no\\
&\quad+\frac{1}{\eps}\int_s^t\cL_2\tilde\psi_{t,\gamma}(r,X_r^\eps,Y_r^{\eps})\dif r+M_{t,s}^1+\frac{1}{\sqrt{\eps}}M_{t,s}^2,
\end{align}
where $M_{t,s}^1$ and $M_{t,s}^2$ are  defined by
\begin{align*}
M_{t,s}^1:=\int_s^t D_x\tilde\psi_{t,\gamma}(r,X_r^\eps,Y_r^{\eps})\dif W_r^1\quad\text{and}\quad M_{t,s}^2:=\int_s^t D_y\tilde\psi_{t,\gamma}(r,X_r^\eps,Y_r^{\eps})\dif W_r^2.
\end{align*}
Multiplying  both sides of (\ref{ito1}) by $\eps$ and using (\ref{ppo}), we obtain
\begin{align*}
&\int_s^t(-A)^\gamma e^{(t-r)A}\tilde\phi(X_r^\eps,Y_r^\eps)\dif r\\
&=-\int_s^t\cL_2\tilde\psi_{t,\gamma}(r,X_r^\eps,Y_r^{\eps})\dif r=\eps\,\big[\tilde\psi_{t,\gamma}(s, X_s^\eps,Y_s^{\eps})-\tilde\psi_{t,\gamma}(t,X^\eps_t,Y_t^{\eps})\big]\\
&\quad+\eps\int_s^t(\p_r+\cL_1)\tilde\psi_{t,\gamma}(r,X_r^\eps,Y_r^{\eps})\dif r+\eps\, M_{t,s}^1+\sqrt{\eps}\,M_{t,s}^2.
\end{align*}
Note  that
\begin{align*}
\int_s^t\p_r\tilde\psi_{t,\gamma}(r,X_r^\eps,Y_r^{\eps})\dif r&=\int_s^t\p_r\tilde\psi_{t,\gamma}(r,X_t^\eps,Y_t^{\eps})\dif r\\
&\quad+\int_s^t\p_r\Big[\tilde\psi_{t,\gamma}(r,X_r^\eps,Y_r^{\eps})-\tilde\psi_{t,\gamma}(r,X_t^\eps,Y_t^{\eps})\Big]\dif r\\
&=\tilde\psi_{t,\gamma}(t,X^\eps_t,Y_t^{\eps})-\tilde\psi_{t,\gamma}(s,X^\eps_t,Y_t^{\eps})\\
&\quad+\int_s^t\p_r\Big[\tilde\psi_{t,\gamma}(r,X_r^\eps,Y_r^{\eps})-\tilde\psi_{t,\gamma}(r,X_t^\eps,Y_t^{\eps})\Big]\dif r,
\end{align*}
and that
$$
\p_r\tilde\psi_{t,\gamma}(r,x,y)=(-A)^{1+\gamma} e^{(t-r)A}\tilde\psi(x,y).
$$
As a result, we further get
\begin{align*}
&\int_s^t(-A)^\gamma e^{(t-r)A}\tilde\phi(X_r^\eps,Y_r^\eps)\dif r
=\eps\,(-A)^{\gamma} e^{(t-s)A}\big[\tilde\psi(X^\eps_s,Y_s^{\eps})-\tilde\psi(X^\eps_t,Y_t^{\eps})\big]\\
&+\eps\int_s^t(-A)^{1+\gamma} e^{(t-r)A}\left(\tilde\psi(X^\eps_r,Y_r^{\eps})-\tilde\psi(X^\eps_t,Y_t^{\eps})\right)\dif r\\
&+\eps\int_s^t\cL_1\tilde\psi_{t,\gamma}(r,X_r^\eps,Y_r^{\eps})\dif r+\eps\, M_{t,s}^1+\sqrt{\eps}\,M_{t,s}^2.
\end{align*}
Thus for any $0\leq s\leq t \leq T$ and $q\geq 1$, we deduce that
\begin{align*}
&\mE\left\|\int_s^t(-A)^\gamma e^{(t-r)A}\tilde\phi(X_r^\eps,Y_r^\eps)\dif r\right\|_1^q\\
&\leq C_0\,\bigg(\eps^q\,\mE\big\|(-A)^{\gamma} e^{(t-s)A}\big[\tilde\psi(X^\eps_s,Y_s^{\eps})-\tilde\psi(X^\eps_t,Y_t^{\eps})\big]\big\|_1^q\\
&\quad+\eps^q\,\mE\left\|\int_s^t (-A)^{1+\gamma} e^{(t-r)A}\left(\tilde\psi(X^\eps_r,Y_r^{\eps})-\tilde\psi(X^\eps_t,Y_t^{\eps})\right)\dif r\right\|_1^q\\
&\quad+\eps^q\,\mE\left\|\int_s^t \cL_1\tilde\psi_{t,\gamma}(r,X_r^\eps,Y_r^{\eps})\dif r\right\|_1^q+\eps^q\,\mE\|M_{t,s}^1\|_1^q+\eps^{q/2}\,\mE\|M_{t,s}^2\|_1^q\bigg)\\
&=:\sum_{i=1}^5\sJ_i(t,s,\eps).
\end{align*}
For the first term, by Lemmas {\ref{la41}}, \ref{la42},  \ref{la43} below and the fact that $\theta<1/2$,
we have
\begin{align*}
\sJ_{1}(t,s,\eps)\!&\leq\! C_1\,\eps^q\, (t-s)^{-\gamma q}\Big(\mE\big(1+\|X_t^\eps\|_1+\|X_s^\eps\|_1
+\|Y_t^\eps\|_2^p+\|Y_s^\eps\|_2^p\big)^{2q}
\Big)^{1/2}\\
&\qquad\qquad\qquad\quad\qquad\cdot\Big(\mE\| X_t^\eps-X_s^\eps\|_1^{2q}+\mE\|Y_t^{\eps}-Y_s^{\eps}\|_2^{2 q}\Big)^{1/2}\\
&\leq C_1\,(t-s)^{(\theta-\gamma)q}\eps^{(1-\theta)q}
\leq C_1\,(t-s)^{(
\theta-\gamma)q}\eps^{q/2}.
\end{align*}
Similarly, by Minkowski's inequality  we also have
\begin{align*}
\sJ_{2}(t,s,\eps)\!
&\leq C_2\,\eps^q\Bigg(\!\!\int_s^t(t-r)^{-1-\gamma}\bigg[\Big(\mE\big[\| X_t^\eps- X_r^\eps\|_1^{2q}\big]\Big)^{1/2q}\\
&\qquad\qquad\quad\qquad\qquad\qquad+\Big(\mE\big[\|Y_t^{\eps}-Y_r^{\eps}\|_2^{2 q}\big]
\Big)^{1/2q}\bigg]\dif r\Bigg)^q\\
&\leq C_2\,\eps^q\left(\int_s^t(t-r)^{-1-\gamma}\frac{(t-s)^{\theta}}{\eps^{\theta}}\dif r\right)^q\\
&\leq  C_2\,(t-s)^{(\theta-\gamma)q}\eps^{(1-\theta)q}
\leq C_2\,(t-s)^{(\theta-\gamma)q}\eps^{q/2}.
\end{align*}
To control the third term, by definitions (\ref{L1}), (\ref{psi8}) and Theorem {\ref{PP}}, one can check that
\begin{align*}
\|\cL_1\tilde\psi_{t,\gamma}(r,x,y)\|_1\leq C_3\,(t-r)^{-\gamma}\big(1+\| A x\|_1+\|y\|_2^{p})(1+\| x\|_1+\|y\|_2^{p}),
\end{align*}
which in turn yields by  Minkowski's inequality  and Lemma {\ref{la44}} that for $\gamma'\in(0,1/2)$,
\begin{align*}
\sJ_{3}(t,s,\eps)
&\leq C_3\,\eps^{(1-\gamma') q}\left(\int_s^t (t-r)^{-\gamma}r^{(\theta-1)}\dif r\right)^q\\
&\leq C_3\,(t-s)^{(\theta-\gamma)q}\eps^{(1-\gamma')q}\leq C_3\,(t-s)^{(\theta-\gamma)q}\eps^{q/2}.
\end{align*}
As for $\sI_4(t,s,\eps)$,  by  Burkholder-Davis-Gundy's inequality and the assumption ({\bf A2}), we have
\begin{align*}
&\sJ_{4}(t,s,\eps)\leq C_4\,\eps^{q}\left(\int_s^t\mE\|(-A)^\gamma e^{(t-r)A}D_x\tilde\psi(X_r^\eps,Y_r^{\eps})Q_1^{1/2}\|_{\sL_2(H_1)}^{2}\dif  r\right)^{q/2}\\
&\leq  C_4\,\eps^{q}\left(\int_s^t\Big(1+\mE\| X_r^\eps\|_1^{2}+\mE\|Y_r^{\eps}\|_2^{2p}\Big)\|(-A)^\gamma  e^{(t-r)A}Q_1^{1/2}\|_{\sL_2(H_1)}^2\dif  r\right)^{q/2}\\
&\leq  C_4\,(t-s)^{(1/2-\gamma)q}\eps^{q}\leq  C_4\,(t-s)^{(\theta-\gamma)q}\eps^{q},
\end{align*}
and similarly one can check that
\begin{align*}
\sJ_{5}(t,s,\eps) \leq  C_5\,(t-s)^{(1/2-\gamma)q}\eps^{q/2}\leq  C_5\,(t-s)^{(\theta-\gamma)q}\eps^{q/2}.
\end{align*}
Combining the above computations, we get
the desired estimate.
\end{proof}

Now, we are in the position to give:

\begin{proof}[Proof of estimate (\ref{nxx})]
	
Fix $T>0$ below. In view of (\ref{spde22}) and (\ref{msb}), we have for every $t\in[0,T]$ and $\gamma\in[0,\theta\wedge1/2)$,
\begin{align*}
(-A)^\gamma (X_t^\eps-\bar X_t)&=\int_0^t(-A)^\gamma e^{(t-s)A}\big[\bar F(X_s^\eps)-\bar F(\bar X_s)\big]\dif s\\
&\quad+\int_0^t(-A)^\gamma e^{(t-s)A}\delta F(X_s^\eps,Y_s^{\eps})\dif s,
\end{align*}
where $\delta F$ is defined by (\ref{dF}).
Thus for any   $q\geq 1$, we have
\begin{align*}
\mE \|(-A)^\gamma (X_t^\eps-\bar X_t)\|_1^q&\leq C_q\, \mE\left\|\int_0^t(-A)^\gamma e^{(t-s)A}\big[\bar F(X_s^\eps)-\bar F(\bar X_s)\big]\dif s\right\|_1^q\\
&\quad+C_q\,\mE\left\|\int_0^t(-A)^\gamma e^{(t-s)A}\delta F(X_s^\eps,Y_s^{\eps})\dif s\right\|_1^q\\
&=:\sI_1(t,\eps)+\sI_2(t,\eps).
\end{align*}
By Lemma  \ref{aveF} and Minkowski's inequality, we deduce that
\begin{align*}
\sI_1(t,\eps)
&\leq C_1\,\mE\left(\int_0^{t}(t-s)^{-\gamma}\|\bar F(X_s^\eps)-\bar F(\bar X_s)\|_1\dif s\right)^q\\
&\leq C_1\,\left(\int_0^{t}(t-s)^{-\gamma} \Big(\mE\|X_s^\eps-\bar X_s\|_1^q\Big)^{1/q}\dif s\right)^q.
\end{align*}
For the second term, note that $\delta F(x,y)$ satisfies the centering condition (\ref{cen}). As a result, it follows by Lemma \ref{strf} directly that
\begin{align*}
\sI_{2}(t,\eps)\leq C_2 \,\eps^{q/2}.
\end{align*}
Thus we arrive at
\begin{align}\label{77}
\mE \|(-A)^\gamma (X_t^\eps-\bar X_t)\|_1^q&\leq C_3 \,\eps^{q/2}\no\\
&+C_3\,\left(\int_0^t(t-s)^{-\gamma}\Big(\mE\|X_s^\eps-\bar X_s\|_1^q\Big)^{1/q}\dif s\right)^q.
\end{align}
Letting $\gamma=0$, by  Gronwall's inequality we obtain
\begin{align*}
\sup_{t\in[0,T]}\mE \|X_t^\eps-\bar X_t\|_1^q\leq C_4\,\eps^{q/2}.
\end{align*}
Taking this back into (\ref{77}), we get the desired result.
\end{proof}

\section{Normal deviations}

\subsection{Kolmogorov equation}

Recall that $\bar X_t$ and $\bar Z_t$ satisfy the equations (\ref{spde2}) and (\ref{spdez}), respectively. We write a system of equations for the process $(\bar X_t, \bar Z_t)$ as follows:
\begin{equation*}
\left\{ \begin{aligned}
&\dif \bar{X}_t=A\bar{X}_t\dif t+\bar{F}(\bar{X}_t)\dif t+\dif W_t^1,\qquad\qquad\qquad\, \bar X_0=x,\\
&\dif \bar Z_t=A\bar Z_t\dif t+D_x\bar F(\bar X_t).\bar Z_t\dif t+\sigma(\bar X_t)\dif \tilde W_t,\qquad\bar Z_0=0.
\end{aligned} \right.
\end{equation*}
Note that the processes $\bar X_t$ and $\bar Z_{t}$ depend on the initial value $x$. Below, we shall write $\bar X_t(x)$  when we want to stress its dependence on the
initial value, and use $\bar Z_{t}(x,z)$ to denote the process $\bar Z_{t}$ with initial point $\bar Z_0=z\in H_1$.
Let $\bar \cL$ be the formal infinitesimal generator of the  Markov process $(\bar X_t, \bar Z_t)$, i.e.,
$$
\bar \cL:=\bar \cL_1+\bar \cL_3,
 $$
 where for every $\varphi\in C_p^2(H_1)$, $\bar \cL_1$ and $\bar \cL_3$ are defined by
\begin{align}
\!\!\!\!\!\!\!\bar\cL_1\varphi(x):=\bar\cL_1(x)\varphi(x)&:=\langle Ax+\bar F(x), D_x\varphi(x)\rangle_1+\frac{1}{2}Tr[D_x^2\varphi(x)Q_1],\label{bL1}\\
\bar \cL_3\varphi(z):=\bar \cL_3(x,z)\varphi(z)&:=\<Az+D_x\bar F(x)z,D_z\varphi(z)\>_1\no\\
&\;\quad\,+\frac{1}{2}\,Tr\big[D^2_{z}\varphi(z)\sigma(x)\sigma^*(x)\big].\label{bL3}
\end{align}

  Fix $T>0$, consider the following Cauchy problem on $[0,T]\times H_1\times H_1$:
\begin{equation} \label{kez}
\left\{ \begin{aligned}
&\partial_t\bar u(t,x,z)=\bar \cL\,\bar u(t,x,z),\quad t\in(0,T],\\
& \bar u(0,x,z)=\varphi(z),\\
\end{aligned} \right.
\end{equation}
where $\varphi: H_1\to\mR$ is measurable.
We have the following result, which will be used below to prove the weak convergence of $Z_t^\eps$ to $\bar Z_t$.

\bt\label{lako}
For every $\varphi\in \mC_b^4(H_1)$, there exists a   solution $\bar u\in C_b^{1,2,4}([0,T]\times H_1\times H_1)$  to the equation (\ref{kez}) which is given by
\begin{align}\label{prou}
\bar u(t,x,z)=\mE\big[\varphi(\bar Z_t(x,z))\big].
\end{align}
Moreover, we have:
	
	\vspace{1mm}
	\noindent
(i) For any  $t\in(0,T]$, $x,z\in H_1$ and $h\in \cD((-A)^{\beta})$ with $\beta\in[0,1]$,
	\begin{align}\label{uz}
	|D_z\bar u(t,x,z).(-A)^\beta h|\leq C_1\,t^{-\beta}\| h\|_1;
	\end{align}
(ii) For any $t\in(0,T]$, $x,z\in H_1$, $h_1\in \cD((-A)^{\beta_1})$ and $h_2\in \cD((-A)^{\beta_2})$  with $\beta_1,\beta_2\in[0,1]$,
	\begin{align}\label{uz2}
	| D^2_{z}\bar u(t,x,z).((-A)^{\beta_1}h_1,(-A)^{\beta_2}h_2)|\leq C_2\,t^{-\beta_1-\beta_2}\| h_1\|_1\| h_2\|_1,
	\end{align}
 and for any $x,z,h_2\in H_1$ and $h_1\in \cD((-A)^{\beta})$ with $\beta\in[0,1]$,
\begin{align}\label{uzx}
	| D_xD_{z}\bar u(t,x,z).((-A)^{\beta}h_1,h_2)|\leq C_2\,t^{-\beta}\| h_1\|_1\| h_2\|_1;
	\end{align}
(iii) For any $t\in(0,T]$, $x,z\in H_1$, $h_1\in \cD((-A)^{\beta_1}),h_2\in \cD((-A)^{\beta_2})$ and $h_3\in \cD((-A)^{\beta_3})$ with $\beta_1,\beta_2,\beta_3\in[0,1]$,
	\begin{align}\label{uz3}
	&| D_{z}^3\bar u(t,x,z).((-A)^{\beta_1}h_1,(-A)^{\beta_2}h_2,(-A)^{\beta_3}h_3)|\no\\
&\leq C_3\,t^{-\beta_1-\beta_2-\beta_3}\| h_1\|_1\| h_2\|_1\|h_3\|_1,
\end{align}
and for any $x,z,h_3\in H_1$, $h_1\in \cD((-A)^{\beta_1})$ and $h_2\in \cD((-A)^{\beta_2})$  with $\beta_1,\beta_2\in[0,1]$,
\begin{align}\label{uz2x}
&| D_xD_{z}^2\bar u(t,x,z).((-A)^{\beta_1}h_1,(-A)^{\beta_2}h_2,h_3)|\no\\
&\leq C_3\,t^{-\beta_1-\beta_2}\| h_1\|_1\| h_2\|_1\|h_3\|_1;
\end{align}
(iv) For any $t\in(0,T]$, $x\in \cD(-A)$ and $z, h\in H_1$,
	\begin{align}\label{utz}
	\vert \p_tD_z\bar u(t,x,z).h|\leq C_4\,\Big(t^{-1}(1+\| z\|_{1})+\|Ax\|_1+\|x\|_1^2\Big)\|h\|_1;
	\end{align}
\noindent(v) For any  $t\in(0,T]$, $x\in \cD(-A)$ and $z, h_1,h_2\in H_1$,
	\begin{align}\label{utzz}
	\vert \p_tD_z^2\bar u(t,x,z).(h_1,h_2)|\!\!
\leq\! C_5\!\Big(t^{-1}(1\!+\!\| z\|_{1})\!+\!\!\|Ax\|_1\!+\!\|x\|_1^2\Big)\!\|h_1\|_1\|h_2\|_1,
	\end{align}
and for any $x\in \cD(-A),z,h_1\in H_1$ and $h_2\in \cD((-A))$,
\begin{align}\label{utxz}
\vert \p_tD_xD_z\bar u(t,x,z).(h_1,h_2)|&\leq C_5\,\Big(t^{-1}(1+\|z\|_1)+\|Ax\|_1+\|x\|_1^2\Big)\|h_1\|_1\|h_2\|_{1}\no\\
&\quad+C_5\,\|h_1\|_1\|Ah_2\|_{1};
\end{align}
where $C_i$, $i=1,\cdots, 5,$ are positive constants.
\et

\br
The estimates in {\it (i)}-{\it (iii)} have been studied in  \cite[Proposition 7.1]{Br2} when the diffusion coefficient is a constant and in \cite[Theorem 4.2, Theorem 4.3 and Proposition 4.5]{Br4} for general nonlinear diffusion coefficients. However, the index $\beta$ in (\ref{uz}), (\ref{uzx}) and (\ref{uz2x}), $\beta_1, \beta_2, \beta_3$ in (\ref{uz2}) and (\ref{uz3}) are restricted to $[0,1)$, which is not sufficient for us to use below. The key observation here is that the equation (\ref{spdez}) satisfied by $\bar Z_t$ is a linear one, and we do not involve estimates for $D_x\bar u$ and $D^2_x\bar u$.
Thus some new techniques are needed in the proof of Theorem \ref{main3} to avoid using these estimates.
\er

\begin{proof}
{\it (i)}-{\it (iii).} By using the same argument as in  \cite[Theorem 13]{Da},  we can prove that $\bar u$ defined by (\ref{prou}) is a solution to the equation (\ref{kez}). Moreover, $\bar u $ has bounded Gat\^eaux derivatives with respect to the $x$-variable up to order $2$ and with respect to the $z$-variable up to order $4$, see also \cite[Section 7]{Br2} and \cite[Section 4]{Br4}. Furthermore, in view of (\ref{prou})  we deduce that for any $\beta\in[0,1]$,
$$
D_z\bar u(t,x,z).(-A)^\beta h=\mE\Big[\<\varphi'(\bar Z_t(x,z)),D_z\bar Z_t(x,z).(-A)^\beta h\>_1\Big].
$$
Since $\bar Z_t$ satisfies (\ref{spdez}), we thus have
$$
\dif (D_z\bar Z_t(x,z).(-A)^\beta h)=\big(A+D_x\bar F(\bar X_t)\big).(D_z\bar Z_t(x,z).(-A)^\beta h)\dif t,
$$
and the initial value is given by $D_z\bar Z_0(x,z).(-A)^\beta h=(-A)^\beta h.$
As a result,
$$
\|D_z\bar Z_t(x,z).(-A)^\beta h\|_1\leq C_0\,t^{-\beta}\|h\|_1,
$$
which in turn yields (\ref{uz}).  Estimates (\ref{uz2})-(\ref{uz2x}) can be proved similarly, hence we omit the details here.

\vspace{1mm}
\noindent {\it (iv)}
To  prove estimate (\ref{utz}), by (\ref{kez}) we note that for any $h\in H_1,$
\begin{align}\label{ptdz}
\partial_tD_z\bar u(t,x,z).h=D_z\partial_t\bar u(t,x,z).h=D_z(\bar\cL_1+\bar\cL_3)\bar u(t,x,z).h.
\end{align}
By definition (\ref{bL1}) we have
\begin{align}\label{ptdz2}
D_z\bar\cL_1\bar u(t,x,z).h&=D_zD_x\bar u(t,x,z).(Ax+\bar F(x),h)\no\\
&\quad+\frac{1}{2}\sum\limits_{n=1}^\infty \lambda_{1,n}D_zD_x^2\bar u(t,x,z).(e_{1,n},e_{1,n},h),
\end{align}
which implies that
\begin{align}\label{lxdz}
&|D_z\bar\cL_1\bar u(t,x,z).h|\leq C_1(1+\|Ax\|_1)\|h\|_1.
\end{align}
Similarly, by definition  (\ref{bL3}) we have
\begin{align}\label{ptdz3}
D_z\bar\cL_3\bar u(t,x,z).h&=\<Ah+D_x\bar F(x).h,D_z\bar u(t,x,z)\>_1\no\\
&\quad+D_z^2\bar u(t,x,z).(A z+D_x\bar F(x).z,h)\no\\
&\quad+\frac{1}{2}\sum\limits_{n=1}^{\infty} D_z^3\bar u(t,x,z).(\sigma(x)e_{1,n},\sigma(x)e_{1,n},h),
\end{align}
which together with (\ref{uz}) and (\ref{uz2}) yields that
\begin{align}\label{lzdz}
&|D_z\bar\cL_3\bar u(t,x,z).h|\leq C_2\,t^{-1}(1+\|z\|_1)\|h\|_1+(1+\|x\|_1^2)\|h\|_1.
\end{align}
Combining (\ref{ptdz}), (\ref{lxdz}) and (\ref{lzdz}), we obtain (\ref{utz}).

\vspace{1mm}
\noindent {\it (v)} In view of (\ref{ptdz}), (\ref{ptdz2}) and (\ref{ptdz3}), we note that for any $h_1,h_2\in H_1,$
\begin{align*}
\partial_tD_z^2\bar u(t,x,z).(h_1,h_2)&=D_z^2D_x\bar u(t,x,z).(Ax+\bar F(x),h_1,h_2)\\
&\quad+D_z^2\bar u(t,x,z).(Ah_1+D_x\bar F(x).h_1,h_2)\\
&\quad+D_z^2\bar u(t,x,z).(Ah_2+D_x\bar F(x).h_2,h_1)\\
&\quad+D_z^3\bar u(t,x,z).(A z+D_x\bar F(x).z,h_1,h_2)\\
&\quad+\frac{1}{2}\sum\limits_{n=1}^\infty \lambda_{1,n}D_z^2D_x^2\bar u(t,x,z).(e_{1,n},e_{1,n},h_1,h_2)\\
&\quad+\frac{1}{2}\sum\limits_{n=1}^{\infty} D_z^4\bar u(t,x,z).(\sigma(x)e_{1,n},\sigma(x)e_{1,n},h_1,h_2).
\end{align*}
Using (\ref{uz2}), (\ref{uz3}) and Lemma \ref{aveF}, one can check that
\begin{align*}
|\partial_tD_z^2\bar u(t,x,z).(h_1,h_2)|\leq C_3\,(t^{-1}+\|Ax\|_1+\|x\|_1^2+t^{-1}\|z\|_1)\|h_1\|_1\|h_2\|_1,
\end{align*}
which means that (\ref{utzz}) holds. Finally, we have
\begin{align*}
&\partial_tD_xD_z\bar u(t,x,z).(h_1,h_2)=D_xD_zD_x\bar u(t,x,z).(Ax+\bar F(x),h_1,h_2)\\
&\qquad+D_zD_x\bar u(t,x,z).(Ah_2+D_x\bar F(x).h_2,h_1)\\
&\qquad+D_xD_z\bar u(t,x,z).(Ah_1+D_x\bar F(x).h_1,h_2)\\
&\qquad+\<D_x^2\bar F(x).(h_1,h_2),D_z\bar u(t,x,z)\>_1+D_z^2\bar u(t,x,z).\big(D_x^2\bar F(x).(z,h_2),h_1\big)\\
&\qquad+D_xD_z^2\bar u(t,x,z).(A z+D_x\bar F(x).z,h_1,h_2)\\
&\qquad+\frac{1}{2}\sum\limits_{n=1}^\infty \lambda_{1,n}D_xD_zD_x^2\bar u(t,x,z).(e_{1,n},e_{1,n},h_1,h_2)\\
&\qquad+\frac{1}{2}\sum\limits_{n=1}^{\infty} D_xD_z^3\bar u(t,x,z).(\sigma(x)e_{1,n},\sigma(x)e_{1,n},h_1,h_2)\\
&\qquad+\sum\limits_{n=1}^{\infty} D_z^3\bar u(t,x,z).((D_x\sigma(x).h_2)e_{1,n},\sigma(x)e_{1,n},h_1).
\end{align*}
Using   (\ref{uzx}), (\ref{uz2x}) and Lemma \ref{aveF} we  obtain
\begin{align*}
\vert \p_tD_xD_z\bar u(t,x,z).(h_1,h_2)|&\leq C_4\Big(t^{-1}+\|Ax\|_1+\|x\|_1^2+
t^{-1}\|z\|_1\Big)\|h_1\|_1\|h_2\|_{1}\\
&\quad+C_4\|h_1\|_1\|Ah_2\|_{1},
\end{align*}
which yields (\ref{utxz}).
\end{proof}

\subsection{Estimates for $Z_t^\eps$}

Recall that $Z_t^\eps$ satisfies (\ref{zte}).
 In particular, we  have
\begin{align}\label{z7}
Z_t^{\eps}=\!\frac{1}{\sqrt{\eps}}\!\int_0^te^{(t-s)A}[\bar F(X_s^{\eps})\!-\!\bar F( \bar X_s)]\dif s+\!\frac{1}{\sqrt{\eps}}\int_0^te^{(t-s)A}\delta F(X_s^{\eps},Y_s^{\eps})\dif s.
\end{align}
Furthermore, by Theorem \ref{main1} we get that for any $q\geq 1$, $x\in \cD((-A)^\theta)$, $y\in \cD((-B)^\theta)$ with $\theta>0$  and $\gamma\in[0,\theta\wedge1/2)$,
\begin{align}\label{az}
\mE\|(-A)^{\gamma}Z_t^\eps\|_1^q<\infty.
\end{align}
We shall need  the following regularity property of $Z_t^\eps$ with respect to the time variable.

\begin{lemma}\label{rpz}
Let $T>0$, $x\in \cD((-A)^\theta)$ and $y\in \cD((-B)^\theta)$ with $\theta\in (0,1] $. Assume that ({\bf A1}) and ({\bf A2})  hold, $F\in C^{2,\eta}_p(H_1\times H_2,H_1)$ and $G\in C^{2,\eta}_b(H_1\times H_2,H_2)$ with $\eta>0$. Then  for any $q\geq 1$, $0\leq s\leq t\leq T$	and $\vartheta\in(0,\theta)$, there exists a constant $C_{q,T}>0$ such that
	\begin{align*}
\mE\|  Z_t^\eps- Z_s^\eps\|_1^q\leq C_{q,T}(t-s)^{q\vartheta}.
	\end{align*}
\end{lemma}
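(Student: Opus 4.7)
The plan is to use the mild formulation (\ref{z7}) of $Z_t^\eps$, decomposing the increment as
\begin{align*}
Z_t^\eps - Z_s^\eps &= \bigl(e^{(t-s)A} - I\bigr) Z_s^\eps + \frac{1}{\sqrt{\eps}}\int_s^t e^{(t-r)A}\bigl[\bar F(X_r^{\eps}) - \bar F(\bar X_r)\bigr]\dif r \\
&\quad + \frac{1}{\sqrt{\eps}}\int_s^t e^{(t-r)A}\delta F(X_r^{\eps}, Y_r^{\eps})\dif r =: \mathrm{I} + \mathrm{II} + \mathrm{III},
\end{align*}
and then bound each of the three pieces in $L^q(\Omega; H_1)$.

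For the semigroup-increment term $\mathrm{I}$, I would pick $\gamma\in(\vartheta,\theta\wedge\tfrac12)$ and apply the regularization estimate (\ref{pp2}) to obtain $\|\mathrm{I}\|_1\leq C(t-s)^\gamma\|Z_s^\eps\|_{(-A)^\gamma}$. The key input is that by Theorem~\ref{main1} applied with this $\gamma$, we have $\sup_{s\in[0,T]}\mE\|(-A)^\gamma Z_s^\eps\|_1^q<\infty$ uniformly in $\eps$ (this is the content of (\ref{az})). This immediately gives the desired $(t-s)^{q\vartheta}$ control, since $\gamma>\vartheta$.

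For the deterministic integral $\mathrm{II}$, the averaged drift $\bar F$ is globally Lipschitz by the bound (\ref{fff}) in Lemma~\ref{aveF}, and $\|e^{(t-r)A}\|_{\sL(H_1)}\leq 1$. Combining Minkowski's inequality with the zeroth-order case of Theorem~\ref{main1} (which gives $\mE\|X_r^\eps-\bar X_r\|_1^q\leq C\eps^{q/2}$), we obtain $\mE\|\mathrm{II}\|_1^q\leq C(t-s)^q$, which is even better than needed. For the fluctuation integral $\mathrm{III}$, the function $\delta F$ satisfies the centering condition (\ref{cen}) by construction and is $C_p^{2,\eta}$, so the strong fluctuation estimate Lemma~\ref{strf} applies directly with $\gamma=0$, yielding
$$\mE\Bigl\|\int_s^t e^{(t-r)A}\delta F(X_r^\eps,Y_r^\eps)\dif r\Bigr\|_1^q \leq C(t-s)^{\theta q}\,\eps^{q/2},$$
so that $(\mE\|\mathrm{III}\|_1^q)^{1/q}\leq C(t-s)^\theta$, which again dominates $(t-s)^\vartheta$.

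Adding the three estimates gives the claim. The only delicate step is $\mathrm{I}$: one has to trade spatial regularity of $Z_s^\eps$ (measured in $\|\cdot\|_{(-A)^\gamma}$) for time regularity via (\ref{pp2}), and without the fractional strong convergence result of Theorem~\ref{main1} this would not be available. The restriction $\vartheta<\theta\wedge\tfrac12$ (which is all that is actually used in Section~5) is exactly the obstruction coming from the ceiling $\gamma<1/2$ in that theorem, and dictates why the lemma cannot give better than this Hölder exponent.
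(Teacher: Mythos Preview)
Your proof is correct and follows essentially the same strategy as the paper: write $Z_t^\eps - Z_s^\eps$ via the mild formulation, separate the integrals over $[s,t]$ from the action of $(e^{(t-s)A}-I)$ on the contribution up to time $s$, and then invoke Theorem~\ref{main1} for the drift-difference part and Lemma~\ref{strf} for the centered fluctuation part.

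The only organizational difference is that the paper splits your term $\mathrm{I}=(e^{(t-s)A}-I)Z_s^\eps$ into two pieces $\sZ_2$ and $\sZ_4$ (coming from the $\bar F$-difference and the $\delta F$ contributions in (\ref{z7}) respectively), and estimates each directly: $\sZ_2$ via (\ref{pp2}) combined with the $\gamma=0$ case of Theorem~\ref{main1}, and $\sZ_4$ via (\ref{pp2}) combined with the fluctuation estimate (\ref{stfe}) applied at level $\gamma=\vartheta$. Your route is slightly more economical: you keep $\mathrm{I}$ intact and appeal once to the fractional bound (\ref{az}), which already packages both ingredients. Either way the effective restriction is $\vartheta<\theta\wedge\tfrac12$, exactly as you observe, since both (\ref{az}) and (\ref{stfe}) require $\gamma<\theta\wedge\tfrac12$.
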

\begin{proof}
	By (\ref{z7}), we have
	\begin{align*}
		Z_t^\eps-Z_s^\eps&=\frac{1}{\sqrt{\eps}}\int_s^te^{(t-r)A}(\bar F(X_r^\eps)-\bar F(\bar X_r)) \dif r\\
		&\quad+\big(e^{(t-s)A}-I\big)\frac{1}{\sqrt{\eps}}\int_0^se^{(s-r)A}(\bar F(X_r^\eps)-\bar F(\bar X_r)) \dif r\\
		&\quad+\frac{1}{\sqrt{\eps}}\int_s^te^{(t-r)A}\delta F(X_r^\eps,Y_r^\eps) \dif r\\
		&\quad+\big(e^{(t-s)A}-I\big)\frac{1}{\sqrt{\eps}}\int_0^se^{(s-r)A}\delta F(X_r^\eps,Y_r^\eps) \dif r=:\sum\limits_{i=1}^4\sZ_i(t,s).
	\end{align*}
Using Minkowski's inequality and Theorem {\ref{main1}} with $\gamma=0$, we get that
	\begin{align*}
		\mE\|\sZ_1(t,s)\|_1^q\leq C_1\,\left(\frac{1}{\sqrt{\eps}}\int_s^t\big(\mE\|X_r^\eps-\bar X_r\|_1^q\big)^{1/q} \dif r\right)^q\leq C_1\,(t-s)^q.
	\end{align*}
Furthermore, by Proposition \ref{arp} (ii) we have that for any $\vartheta\in (0,1),$
\begin{align*}
	&\mE\|\sZ_2(t,s)\|_1^q\\
&\leq C_2\,(t-s)^{q\vartheta} \left(\frac{1}{\sqrt{\eps}}\int_0^s\Big(\mE\|(-A)^\vartheta e^{(s-r)A}(\bar F(X_r^\eps)-\bar F(\bar X_r))\|_1^q\Big)^{1/q} \dif r\right)^q\\&
		\leq C_2\,(t-s)^{q\vartheta} \left(\frac{1}{\sqrt{\eps}}\int_0^s(s-r)^{-\vartheta}\big(\mE\|X_r^\eps-\bar X_r\|_1^q\big)^{1/q} \dif r\right)^q\leq C_2\,(t-s)^{q\vartheta}.
	\end{align*}
Note that $\delta F(x,y)$ satisfies the centering condition (\ref{cen}). As a direct consequence of the fluctuation estimate (\ref{stfe}), we obtain that
	\begin{align*}
		\mE\|\sZ_3(t,s)\|_1^q \leq C_3\,(t-s)^{q\theta}.
	\end{align*}
Finally, by making use of Proposition \ref{arp} (ii) and (\ref{stfe}) again, we have for any $\vartheta\in(0,\theta)$,
\begin{align*}
		\mE\|\sZ_4(t,s)\|_1^q  &\leq C_4\,(t-s)^{q\vartheta}\, \mE\left\|\frac{1}{\sqrt{\eps}}\int_0^s(-A)^\vartheta e^{(s-r)A}\delta F(X_r^\eps,Y_r^\eps)\dif r\right\|_1^q\\
&\leq C_4\,(t-s)^{q\vartheta}.
	\end{align*}
	Combining the above computations, we get the desired result.
\end{proof}

As in Section 4, to prove Theorem \ref{main3} we also need to reduce the infinite dimensional problem to a finite dimensional one by the Galerkin approximation. Recall that $X_t^{n,\eps}$ and $\bar X_t^{n}$ are defined by (\ref{xyz}) and (\ref{spden2}), respectively.
Define
$$
Z_t^{n,\eps}:=\frac{X_t^{n,\eps}-\bar X_t^{n}}{\sqrt{\eps}}.
$$
Then we have
\begin{align*}
&\dif Z_t^{n,\eps}=AZ_t^{n,\eps} \dif t+\eps^{-1/2}[\bar F_n(X_t^{n,\eps})-\bar F_n( \bar X_t^n)]\dif t+\eps^{-1/2}\delta F_n(X_t^{n,\eps},Y_t^{n,\eps})\dif t,
\end{align*}
where $\bar F_n$ is given by (\ref{Fn}), and $\delta F_n(x,y):=F_n(x,y)-\bar F_n(x)$.
Let $ \bar Z_t^n$ satisfy the following linear equation:
\begin{align}\label{spdezn}
\dif \bar Z_t^n=A\bar Z_t^n\dif t+D_x\bar F_n(\bar X_t^n).\bar Z_t^n\dif t+P_{1}^n\sigma(\bar X_t^n)\dif \tilde W_t,
\end{align}
where $\tilde W_t$ is a cylindrical Wiener process in $H_1$, and $\sigma(x)$ is defined by (\ref{sst}).
We have the following approximation result.

\bl
For every $\eps>0$ and  $x,z\in H_1$, we have
\begin{align}\label{znz}
\lim\limits_{n\to\infty}\mE\Big(\|Z^\eps_t-Z_t^{n,\eps}\|_1+\|\bar Z_t-\bar Z_t^{n}\|_1\Big)=0.
\end{align}
\el

\begin{proof}
	By the definition of $Z_t^\eps,Z_t^{\eps,n}$ and Lemma {\ref{xyn}}, we have
\begin{align*}
\lim\limits_{n\to\infty}\mE\|Z^\eps_t-Z_t^{n,\eps}\|_1
\leq \lim\limits_{n\to\infty}\frac{1}{\sqrt{\eps}}\Big(\mE\|X^\eps_t-X_t^{n,\eps}\|_1
+\mE\|\bar X_t-\bar X_t^{n}\|_1\Big)=0.
\end{align*}
Furthermore, in view of (\ref{spdez}) and (\ref{spdezn}), we have
\begin{align*}
\bar Z_t-\bar Z_t^{n}&=\int_0^te^{(t-s)A}[D_x\bar F(\bar X_s).\bar Z_s-D_x\bar F(\bar X_s^{n}).\bar Z_s]\dif s\\
&\quad+\int_0^te^{(t-s)A}[D_x\bar F(\bar X_s^n).\bar Z_s-D_x\bar F_n(\bar X_s^{n}).\bar Z_s^n]\dif s\\
&\quad+\int_0^te^{(t-s)A}[\sigma(\bar X_s)-\sigma(\bar X_s^n)]P_1^n\dif \tilde W_s.
\end{align*}
Thus we deduce that
\begin{align*}
&\mE\|\bar Z_t-\bar Z_t^{n}\|_1^2\leq C_1\left(\int_0^t\left(\mE\|\bar X_s-\bar X_s^{n}\|_1^4\right)^{1/4}\left(\mE\|\bar Z_s\|_1^4\right)^{1/4}\dif s\right)^{2}\\
&+C_1\!\int_0^t\!\big(\mE\|\bar X_s\!-\!\bar X_s^{n}\|_1^2+\|D_x\bar F\!-\!D_x\bar F_n\|_{\sL(H_1)}^2\big)\dif s+C_1\!\int_0^t\mE\|\bar Z_s\!-\!\bar Z_s^n\|_1^2\dif s.
\end{align*}
By Gronwall's inequality we obtain
\begin{align*}
\mE\|\bar Z_t-\bar Z_t^{n}\|_1^2\leq C_2\,e^{C_2\, t}\left(\int_0^t\big(\mE\|\bar X_s-\bar X_s^{n}\|_1^2+\|D_x\bar F-D_x\bar F_n\|_{\sL(H_1)}^2\big)\dif s\right),
\end{align*}
which yields the desired result.
	\end{proof}

\subsection{Proof of Theorem \ref{main3}}

For any $T>0$ and $\varphi\in \mC_b^4(H_1),$ we have for $t\in[0,T]$,
\begin{align}\label{wcz}
&\left|\mE[\varphi(Z_t^{\eps})]-\mE[\varphi(\bar Z_t)]\right|\leq \left|\mE[\varphi(Z_t^{\eps})]-\mE[\varphi(Z_t^{n,\eps})]\right|\no\\
&+\left|\mE[\varphi(Z_t^{n,\eps})]-\mE[\varphi(\bar Z_t^n)]\right|+\left|\mE[\varphi(\bar Z_t^n)]-\mE[\varphi(\bar Z_t)]\right|.
\end{align}
 By making use of (\ref{znz}), the first and the last terms on the right-hand of (\ref{wcz}) converge to $0$ as $n\to\infty$. Therefore,
in order to prove Theorem {\ref{main3}}, we only need to show that for any $\gamma\in(0,1/2)$,
\begin{align}\label{nzz}
\sup_{t\in[0,T]}\left|\mE[\varphi(Z_t^{n,\eps})]-\mE[\varphi(\bar Z_t^n)]\right|\leq C_T\,\eps^{\frac{1}{2}-\gamma},
\end{align}
where {\bf $C_T>0$ is a constant independent of} $n.$ As before, we shall only work with the approximation system in the following subsection, and proceed to prove
bounds that are uniform with respect to the dimension. To simplify the notations, we shall omit the index $n$  as before.

Fix $T>0$, and for every $\varphi\in C_p^1(H_1)$, let
\begin{align}\label{333}
\cL_3^\eps\varphi(z)&:=\cL_3^\eps(x,y,\bar x,z)\varphi(z):=\<Az,D_z\varphi(z)\>_1\no\\
&+\frac{1}{\sqrt{\eps}}\<\bar F(x)-\bar F(\bar x), D_z\varphi(z)\>_1+\frac{1}{\sqrt{\eps}}\<\delta F(x,y),D_z\varphi(z)\>_1.
\end{align}
 We call a function $\phi(t,x,y,\bar x,z)$ defined on $[0,T]\times H_1\times H_2\times H_1\times H_1$ admissible,  if it is centered, i.e.,
\begin{align}\label{cen222}
\int_{H_2}\phi(t,x,y,\bar x,z)\mu^x(\dif y)=0,\quad\forall t>0, x,\bar x,z\in H_1,
\end{align}
and the following conditions hold:

\vspace{1mm}
\noindent {\bf (H):} for any $t\in[0,T)$, $x,z\in H_1$, $y\in H_2$, $\bar x\in\cD(-A)$ and $h_1,h_2\in H_1$,
\begin{align}\label{as1}
	&|\p_t\phi(t,x,y,\bar x,z)|+|D_x\p_t\phi(t,x,y,\bar x,z).h_1|+|D_z\p_t\phi(t,x,y,\bar x,z).h_2|\no\\
&\leq C_0\,(T-t)^{-1}(1+\|A\bar x\|_1+\|\bar x\|_1^2+\|z\|_1)\no\\
&\qquad\qquad\qquad\qquad\times(1+\|x\|_1+\|y\|_2^p)(\|h_1\|_1+\|h_2\|_1),
\end{align}
and for any $h_3\in\cD((-A))$,
\begin{align}\label{as0}
|D_{\bar x}\p_t\phi(t,x,y,\bar x,z).h_3|&\leq C_0\Big((T-t)^{-1}(1+\|A\bar x\|_1+\|\bar x\|_1^2+
\|z\|_1)\|h_3\|_{1}\no\\
&\qquad\quad+\|Ah_3\|_{1}\Big)\times(1+\|x\|_1+\|y\|_2^p),
\end{align}
and for any $h\in \cD((-A)^{\vartheta})$ with $\vartheta\in[0,1],$
\begin{align}\label{as4}
|D_z\phi(t,x,y,\bar x,z).(-A)^\vartheta h|\leq C_0\,(T-t)^{-\vartheta}(1+\|x\|_1+\|y\|_2^{p})\|h\|_1.
\end{align}

Given an admissible function $\phi(t,x,y,\bar x,z) \in C_p^{1,2,\eta,2,2}([0,T]\times H_1\times H_2\times H_1\times H_1)$ with $\eta>0$, let $\psi(t,x,y,\bar x,z)$ solve the following Poisson equation:
\begin{align}\label{psi1}
\cL_2(x,y)\psi(t,x,y,\bar x,z)=-\phi(t,x,y,\bar x,z),
\end{align}
and define
$$
\overline{\delta F\cdot\nabla_z\psi}(t,x,\bar x,z):=\int_{H_2}\nabla_z\psi(t,x,y,\bar x,z).\delta F(x,y)\mu^x(\dif y).
$$
The following weak fluctuation estimates for an integral functional of process $(X_t^\eps,Y_t^{\eps},\bar X_t,Z_t^{\eps})$  will play an important role in proving (\ref{nzz}). Compared with Lemma \ref{strf}, extra efforts are needed to control the time singularity in the integral.

\bl[Weak fluctuation estimates]\label{weaf}
Let $T,\theta>0,$ $x\in \cD((-A)^\theta)$ and $y\in \cD((-B)^\theta)$. Assume that ({\bf A1}) and ({\bf A2}) hold, $F\in C^{2,\eta}_p(H_1\times H_2,H_1)$ and $G\in C^{2,\eta}_b(H_1\times H_2,H_2)$.
Then for every admissible function $\phi\in C_p^{1,2,\eta,2,2}([0,T] \times H_1\times H_2\times H_1\times H_1)$ with $\eta>0$, $t\in[0,T]$ and  $\gamma\in(0,1/2),$ we have
\begin{align}
\bigg|\mE\bigg(\int_0^t\phi(t,X_s^{\eps},Y_s^{\eps},\bar X_s,Z_s^{\eps})\dif s\bigg)\bigg|&\leq C_T\,\eps^{\frac{1}{2}},  \label{we1}
\end{align}
and
\begin{align}
\bigg|\mE\bigg(\frac{1}{\sqrt{\eps}}\int_0^t\!&\phi(s,X_s^{\eps},Y_s^{\eps},\bar X_s,Z_s^{\eps})\dif s\no\\
&-\int_0^t\overline{\delta F\cdot\nabla_z\psi}(s,X_s^{\eps},\bar X_s,Z_s^{\eps})\dif s\bigg)\bigg|\!\leq C_T\,\eps^{\frac{1}{2}-\gamma},\label{we2}
\end{align}
where $C_T>0$ is a constant.
\el

\begin{proof}
We divide the proof into two steps.

\vspace{1mm}
\noindent{\bf Step 1.} We first prove estimate (\ref{we1}). By Theorem \ref{PP}, we have that $\psi\in C_p^{1,2,2,2,2}([0,T]\times H_1\times H_2\times H_1\times H_1)$. Thus we can apply It\^o's formula to $\psi(t,X_t^\eps,Y_t^\eps,\bar X_t,Z_t^\eps)$ to derive that
	\begin{align*}
	&\mE[\psi(t,X_t^{\eps},Y_t^\eps,\bar X_t,Z_t^\eps)]\\&=\psi(0,x,y,x,0)+
	\mE\left(\int_0^t(\partial_s+\cL_1+\mathcal{\bar L}_1+\cL_3^\eps)\psi(s,X_s^{\eps},Y_s^\eps,\bar X_s,Z_s^\eps)\dif s\right)\\
	&\quad+\frac{1}{\eps}\mE\left(\int_0^t\mathcal{L}_2\psi(s,X_s^{\eps},Y_s^\eps,\bar X_s,Z_s^\eps)\dif s\right),
	\end{align*}
where $\cL_1, \cL_2,\mathcal{\bar L}_1$ and $\cL_3^\eps$ are defined by (\ref{L1}), (\ref{L2}), (\ref{bL1}) and (\ref{333}), respectively.
	Multiplying  both sides of the above equality by $\eps$ and taking into account (\ref{psi1}), we obtain
	\begin{align*}
	&\bigg|\mE\bigg(\int_0^t\phi(s,X_s^{\eps},Y_s^{\eps},\bar X_s,Z_s^{\eps})\dif s\bigg)\bigg|\\
	&=\bigg|\eps\,\mE\big[\psi(0,x,y,x,0)-\psi(t,X_t^{\eps},Y_t^\eps,\bar X_t,Z_t^\eps)\big]\\
	&\quad+\eps\;\mE\bigg(\int_0^t(\partial_s+\mathcal{\bar L}_1+\cL_1+\cL_3^\eps)\psi(s,X_s^{\eps},Y_s^{\eps},\bar X_s,Z_s^{\eps})\dif s\bigg)\bigg|\\
	&\leq\eps\,\mE\big|\big[\psi(0,x,y,x,0)-\psi(0,X_t^{\eps},Y_t^\eps,\bar X_t,Z_t^\eps)\big]\big|\\
	&\quad +\eps\,\mE\bigg|\int_0^t\big[\partial_s\psi(s,X_s^{\eps},Y_s^\eps,\bar X_s,Z_s^{\eps}) -\partial_s\psi(s,X_t^{\eps},Y_t^\eps,\bar X_t,Z_t^\eps)\big]\dif s\bigg|\\
	&\quad+\eps\,\mE\bigg|\int_0^t(\cL_1+\mathcal{\bar L}_1)\psi(s,X_s^{\eps},Y_s^{\eps},\bar X_s,Z_s^{\eps})\dif s\bigg|\\
	&\quad+\eps\,\mE\bigg|\int_0^t\mathcal{L}_3^\eps\psi(s,X_s^{\eps},Y_s^{\eps},\bar X_s,Z_s^{\eps})\dif s\bigg|
	=:\sum_{i=1}^4\sO_i(t,\eps).
	\end{align*}
By making use of Theorem \ref{PP} and Lemma \ref{la41}, we have
\begin{align*}
\sO_1(t,\eps)\leq C_1\,\eps\,\mE\big(1+\|X_t^\eps\|_1+\|Y_t^\eps\|_2^{p}\big)
\leq C_1\,\eps.
\end{align*}
For the second term, since $\phi$ satisfies  (\ref{as1}) and (\ref{as0}), and $t,x,\bar x,z$ all are parameters in equation (\ref{psi1}),  by  Theorem \ref{PP} we get  that $\psi$ satisfies (\ref{as1}) and (\ref{as0}) too,
which together with   Lemmas {\ref{la42}}, {\ref{la43}}, {\ref{bx}}, {\ref{rpz}} and H\"older's inequality implies that for any $\gamma\in(0,1/2)$,
\begin{align*}
\sO_2(t,\eps)
&\leq C_2\,\eps\int_0^t (t-s)^{-1}\Big(\mE(\|\bar X_s-\bar X_t\|_1^3+\|X_s^\eps-X_t^\eps\|_1^3\\
&\qquad\qquad\qquad\qquad\qquad+ \|Y_s^\eps-Y_t^\eps\|_2^3+\|Z_s^\eps-Z_t^\eps\|_1^3)\Big)^{1/3}\dif s\\
&\quad+C_2\,\eps\int_0^t\Big(\mE(\|A\bar X_s\|^2+\|A\bar X_t\|_1^2)\Big)^{1/2}\dif s\\
&\leq C_2\,\eps^{1-\gamma}\leq C_2\,\eps^{1/2}.
\end{align*}
To treat the third term, since for each $t\in [0,T]$, $\psi(t,\cdot,\cdot,\cdot,\cdot)\in C_p^{2,2,2,2}(H_1\times H_1\times H_2\times H_1)$, we have
\begin{align*}
\|(\cL_1+\mathcal{\bar L}_1)\psi(t,x,y,\bar x,z)\|_1\!&\leq\! |\langle Ax+F(x,y),D_x\psi(t,x,y,\bar x,z)\rangle_1|\\
&\quad+\frac{1}{2}\,Tr(Q_1)\|D^2_{ x}\psi(t,x,y,\bar x,z)\|_{\sL(H_1\times H_1,\mR)}\\
&\quad+|\langle A\bar x+\bar F(\bar x),D_{\bar x}\psi(t,x,y,\bar x,z)\rangle_1|\\
&\quad+\frac{1}{2}\,Tr(Q_1)\|D^2_{\bar x}\psi(t,x,y,\bar x,z)\|_{\sL(H_1\times H_1,\mR)}\\
&\leq\! C_3\big(1\!+\!\|A\bar x\|_1\!+\!\|Ax\|_1\!+\!\|y\|_2^p)(1\!+\!\|x\|_1\!+\!\|y\|_2^{p}\big).
\end{align*}
Thus by Lemmas \ref{la41}, \ref{la44} and \ref{bx}, we have
\begin{align*}
\sO_3(t,\eps)
&\leq  C_3\, \eps^{1-\gamma}\leq\!C_3\,\eps^{1/2}.
\end{align*}
For the last term, we write
\begin{align*}
&\sO_{4}(t,\eps)=\eps\,\mE\left(\int_0^t\<AZ_s^\eps,D_z\psi(s,X_s^\eps,Y_s^\eps,\bar X_s,Z_s^\eps)\>_1\dif s\right)\\&+\sqrt{\eps}\,\mE\left(\int_0^t\< \bar F(X_s^\eps)-\bar F(\bar X_s),D_z\psi(s,X_s^\eps,Y_s^\eps,\bar X_s,Z_s^\eps)\>_1\dif s\right)\\
&+\sqrt{\eps}\,\mE\left(\int_0^t\<\delta F(X_s^\eps,Y_s^\eps),D_z\psi(s,X_s^\eps,Y_s^\eps,\bar X_s,Z_s^\eps)\>_1\dif s\right)
=:\sum\limits_{i=1}^3\sO_{4,i}(t,\eps).
\end{align*}
In view of (\ref{as4}), Theorem {\ref{PP}} and (\ref{az}), we have for $\gamma\in(0,1/2\wedge\theta)$,
\begin{align*}
\sO_{4,1}(t,\eps)\leq C_4\,\eps\int_0^t(t-s)^{-1+\gamma}\big(\mE\|(-A)^\gamma Z_s^\eps\|_1^2\big)^{1/2}\dif s\leq C_4\, \eps.
\end{align*}
Furthermore, it is easy to see that
\begin{align*}
\sO_{4,2}(t,\eps)+\sO_{4,3}(t,\eps)\leq C_4\,\sqrt{\eps}\int_0^t\Big(1&+\mE\|X_s^{\eps}\|_1^2\\
&+\mE\|\bar X_s\|_1^2+\mE\|Y_s^{\eps}\|_2^{2p} \Big)\dif s\leq C_4\,\sqrt{\eps}.
\end{align*}
Combining the above computations, we get the desired result.

\vspace{1mm}
\noindent{\bf Step 2.} We proceed to prove estimate (\ref{we2}). By following exactly the same argument as in  the proof of Step 1, we deduce that that for any $\gamma\in(0,1/2)$,
\begin{align*}
&\bigg|\mE\bigg(\frac{1}{\sqrt{\eps}}\int_0^t\!\phi(s,X_s^{\eps},Y_s^{\eps},\bar X_s,Z_s^{\eps})\dif s
-\int_0^t\overline{\delta F\cdot\nabla_z\psi}(s,X_s^{\eps},\bar X_s,Z_s^{\eps})\dif s\bigg)\bigg|\\
&\leq\sqrt{\eps}\,\big|\mE\big[\psi(0,x,y,x,0)-\psi(0,X_t^\eps,Y_t^\eps,\bar X_t,Z_t^\eps)\big]\big|\\
&\quad+\sqrt{\eps}\,\bigg|\mE\left(\int_0^t\left(\partial_t\psi(s,X_s^\eps,Y_s^\eps,\bar X_s,Z_s^{\eps})-\partial_t\psi(s,X_t^\eps,Y_t^\eps,\bar X_t,Z_t^\eps)\right)\dif s\right)\bigg|\\
&\quad+\sqrt{\eps}\,\bigg|\mE\left(\int_0^t(\mathcal{\bar L}_1+\cL_1)\psi(s,X_s^\eps,Y_s^{\eps},\bar X_s,Z_s^{\eps})\dif s\right)\bigg|\\
&\quad+\bigg|\mE\bigg(\sqrt{\eps}\int_0^t\mathcal{L}_3^\eps\psi(s,X_s^\eps,Y_s^{\eps},\bar X_s,Z_s^{\eps})\dif s-\int_0^t\overline{\delta F\cdot\nabla_z\psi}(s,X_s^{\eps},\bar X_s,Z_s^{\eps})\dif s\bigg)\bigg|\\
&\leq C_1\, \eps^{1/2- \gamma}+\bigg|
\mE\bigg(\sqrt{\eps}\int_0^t\mathcal{L}_3^\eps\psi(s,X_s^\eps,Y_s^{\eps},\bar X_s,Z_s^{\eps})\dif s\\&\qquad\qquad\qquad\qquad\qquad-\int_0^t\overline{\delta F\cdot\nabla_z\psi}(s,X_s^{\eps},\bar X_s,Z_s^{\eps})\dif s\bigg)\bigg|.
\end{align*}
Now, for the last term we write
\begin{align*}
&\bigg|\mE\bigg(\sqrt{\eps}\,\int_0^t\mathcal{L}_3^\eps\psi(s,X_s^\eps,Y_s^{\eps},\bar X_s,Z_s^{\eps})\dif s-\int_0^t\overline{\delta F\cdot\nabla_z\psi}(s,X_s^{\eps},\bar X_s,Z_s^{\eps})\dif s\bigg)\bigg|\\
&\leq\sqrt{\eps}\,\bigg|\mE\left(\int_0^t\<AZ_s^\eps,D_z\psi(s,X_s^\eps,Y_s^\eps,\bar X_s,Z_s^\eps)\>_1\dif s\right)\bigg|\\
&\quad+\bigg|\mE\left(\int_0^t\< \bar F(X_s^\eps)-\bar F(\bar X_s),D_z\psi(s,X_s^\eps,Y_s^\eps,\bar X_s,Z_s^\eps)\>_1\dif s\right)\bigg|\\
&\quad+\bigg|\mE\bigg(\int_0^t\Big(\<\delta F(X_s^\eps,Y_s^\eps),D_z\psi(s,X_s^\eps,Y_s^\eps,\bar X_s,Z_s^\eps)\>_1\\&\qquad\qquad\qquad\qquad\qquad-\overline{\delta F\cdot\nabla_z\psi}(s,X_s^\eps,\bar X_s,Z_s^{\eps})\Big)\dif s\bigg)\bigg|
=:\sum\limits_{i=1}^3\tilde\sO_{i}(t,\eps).
\end{align*}
We argue as for $\sO_{4,1}(t,\eps)$ to get that
\begin{align*}
\tilde\sO_{1}(t,\eps)\leq C_2\,\eps^{1/2}.
\end{align*}
Using Theorem \ref{main1}, we further have
\begin{align*}
\tilde\sO_{2}(t,\eps)\leq \!C_3\!\int_0^t\big(1+\mE\| X_s^\eps\|_1^2+\mE\| Y_s^\eps\|_2^{2p}\big)^{1/2}\big(\mE\|X_s^{\eps}-\bar X_s\|_1^2\big)^{1/2}     \dif s\leq \! C_3\,\eps^{1/2}.
\end{align*}
Consequently, we obtain that for any $\gamma\in(0,1/2),$
\begin{align*}
&\bigg|\mE\left(\frac{1}{\sqrt{\eps}}\int_0^t\phi(s,X_s^\eps,Y_s^{\eps},\bar X_s,Z_s^{\eps})\dif s-\int_0^t\overline{\delta F\cdot\nabla_z\psi}(s,X_s^\eps,\bar X_s,Z_s^{\eps})\dif s\right)\bigg|\\
&\leq C_4 \,\eps^{1/2-\gamma}+\bigg|\mE\bigg(\int_0^t\Big(\<\delta F(X_s^\eps,Y_s^\eps),D_z\psi(s,X_s^\eps,Y_s^\eps,\bar X_s,Z_s^\eps)\>_1\\
&\qquad\qquad\qquad\qquad\qquad-\overline{\delta F\cdot\nabla_z\psi}(s,X_s^\eps,\bar X_s,Z_s^{\eps})\Big)\dif s\bigg)\bigg|.
\end{align*}
Note that by the definition of $\overline{\delta F\cdot\nabla_z\psi}$, the function
$$
\tilde \phi(t,x,y,\bar x,z):=\<\delta F(x,y),D_z\psi(t,x,y,\bar x,z)\>_1-\overline{\delta F\cdot\nabla_z\psi}(t,x,\bar x,z)
$$
satisfies the centering condition (\ref{cen222}) and assumption {\bf (H)}. Thus, using (\ref{we1}) directly, we obtain
\begin{align*}
\bigg|\mE\bigg(\int_0^t\Big(\<\delta F(X_s^\eps,Y_s^\eps),&D_z\psi(s,X_s^\eps,Y_s^\eps,\bar X_s,Z_s^\eps)\>_1\\
&-\overline{\delta F\cdot\nabla_z\psi}(s,X_s^\eps,\bar X_s,Z_s^{\eps})\Big)\dif s\bigg)\bigg|\leq C_4\,\eps^{1/2}.
\end{align*}
Combining the above computations, we get the desired result.
\end{proof}

Now, we are in the position to give:

\begin{proof}[Proof of estimate (\ref{nzz})]
	Fix $T>0$ below. Let $\bar u(t,x,z)$ be the solution of the Cauchy problem (\ref{kez}).
	For $t\in[0,T]$, define
	$$
	\tilde u(t,x,z):=\bar u(T-t,x,z).
	$$
	Then it is easy to check that
	$$
	\tilde u(0,x,0)=\bar u(T,x,0)=\mE[\varphi(\bar Z_T)]\quad\text{and}\quad\tilde u(T,x,z)=\bar u(0,x,z)=\varphi(z).
	$$
As a result, by It\^o's formula and (\ref{kez}) we deduce that
	\begin{align*}
	&\mE[\varphi(Z_T^{\eps})]-\mE[\varphi(\bar Z_T)]=\mE[\tilde u(T,\bar X_T,Z_T^{\eps})-\tilde u(0,x,0)]\\&
	=\mE\left(\int_0^T\big(\p_t+\mathcal{\bar L}_1+\cL_3^\eps\big)\tilde u(t,\bar X_t,Z_t^{\eps})\dif t\right)\no\\
	&=\mE\left(\int_0^T(\mathcal{L}_3^\eps-\mathcal{\bar L}_3)\tilde u(t,\bar X_t,Z_t^{\eps})\dif t\right)\no\\
	&=\mE\left(\int_0^T\left\langle \frac{\bar F(X_t^\eps)-\bar F(\bar X_t)}{\sqrt{\eps}}-D_x\bar F( \bar X_t).Z_t^\eps, D_z\tilde u(t,\bar X_t,Z_t^{\eps}))\right\rangle_1 \dif t\right)\\
&\quad+\frac{1}{2}\,\mE\left(\int_0^T  Tr\Big(D_{z}^2\tilde u(t,\bar X_t,Z_t^{\eps})\big[\sigma( X_t^\eps)\sigma^{*}(  X_t^\eps)-\sigma( \bar X_t)\sigma^{*}( \bar X_t)\big]\Big)\dif  t\right)\\
	&\quad+\bigg[\mE\left(\frac{1}{\sqrt{\eps}}\int_0^T\langle \delta F( X_t^\eps,Y_t^\eps), D_z\tilde u(t,\bar X_t,Z_t^{\eps}))\rangle_1 \dif t\right)\\
	&\quad\quad-\frac{1}{2}\,\mE\left(\int_0^T  Tr(D_{z}^2\tilde u(t,\bar X_t,Z_t^{\eps})\sigma(  X_t^\eps)\sigma^{*}(  X_t^\eps))\dif t\right)\bigg]=:\sum\limits_{i=1}^3\sN_i(T,\eps).
	\end{align*}
By the mean value theorem,  H\"older's inequality, (\ref{uz}), (\ref{az}) and Theorem {\ref{main1}}, we deduce that for some $\vartheta\in(0,1)$,
	\begin{align*}
	|\sN_1(T,\eps)|&\leq\mE\bigg(\int_0^T\Big|\big\langle[D_x\bar F(X_t^\eps+\vartheta(X_t^\eps-\bar X_t))\\
&\qquad\qquad\qquad-D_x\bar F( \bar X_t) ]. Z_t^\eps, D_z\tilde u(t,\bar X_t,Z_t^{\eps})\big\rangle_1\Big|\dif t\bigg)\\
	&\leq C_1\int_0^T\big(\mE\|X_t^\eps-\bar X_t\|_1^2\big)^{1/2}\big(\mE\|Z_t^\eps\|_1^2\big)^{1/2}\dif t\leq C_1\,\eps^{1/2}.
	\end{align*}
Furthermore, let $\sU_{t,\bar x,z}(x):=Tr(D_z^2\tilde u(t,\bar x,z)\sigma(x)\sigma^*(x)).$
Then  we have that for every $h\in H_1,$
\begin{align*}
|D_x\sU_{t,\bar x,z}(x).h|\leq C_2(1+\|x\|_1^2)\|h\|_1,
\end{align*}
which together with Theorem {\ref{main1}}, Lemmas \ref{la41} and \ref{bx} yields that
	\begin{align*}
|\sN_2(T,\eps)|\leq C_2\int_0^T\big(1+\mE\|X_t^\eps\|_1^4+\mE\|\bar X_t\|_1^4\big)^{1/2}\big(\mE\|X_t^\eps-\bar X_t\|_1^2\big)^{1/2}\leq C_2\,\eps^{1/2}.
\end{align*}
It remains to control the last term $\sN_3(T,\eps)$. For this purpose, recall that $\Psi$  solves the Poisson equation (\ref{poF}), and define
	$$
	\Phi(t,x,y,\bar x,z):=\<\Psi(x,y),D_z\tilde u(t,\bar x,z)\>_1.
	$$
	Since $\cL_2$ is an operator with respect to the $y$ variable, one can check that $\Phi$ solves the following Poisson equation:
	\begin{align*}
	\mathcal{L}_2(x,y)\Phi(t,x,y,\bar x,z)=-\langle \delta F(x,y),D_z\tilde u(t,\bar x,z)\rangle_1=:-\phi(t,x,y,\bar x,z).
	\end{align*}
It is obvious that $\phi$ satisfies the centering condition (\ref{cen222}). 	Furthermore,
 in view of (\ref{uz2}), (\ref{utz}), (\ref{utzz}) and (\ref{utxz}), we have that for any $t\in[0,T)$, $x,z\in H_1$, $y\in H_2$, $\bar x\in\cD(-A)$ and $h_1,h_2\in H_1$,
 \begin{align*}
	&|\p_t\phi(t,x,y,\bar x,z)|+|D_x\p_t\phi(t,x,y,\bar x,z).h_1|+|D_z\p_t\phi(t,x,y,\bar x,z).h_2|\\
	&\leq|\<\delta F(x,y), \p_tD_{z}\bar u(T-t,\bar x,z)\>_1|+|\<D_x\delta F(x,y).h_1, \p_tD_{z}\bar u(T-t,\bar x,z)\>_1|\\&\quad+ |\p_tD^2_{z}\bar u(T-t,\bar x,z).(\delta F(x,y),h_2)|\\
&\leq\! C_3\,(T-t)^{-1}(\|h_1\|_1+\|h_2\|_1)(1+\|A\bar x\|_1+\|\bar x\|_1^2+\|z\|_1)
(1\!+\!\|x\|_1\!+\!\|y\|_2^p),
	\end{align*}
and for any $h_3\in\cD(-A),$
\begin{align*}
&|D_{\bar x}\p_t\phi(t,x,y,\bar x,z).h_3|=\p_tD_{\bar x}D_z\bar u(T-t,\bar x,z).(\delta F(x,y),h_3)\\
&\leq\!C_3\,\Big((T-t)^{-1}(1+\|A\bar x\|_1+\|\bar x\|_1^2+
\|z\|_1)\|h_3\|_1+\|Ah_3\|_1\Big)\\
&\qquad\quad\times(1+\|x\|_1+\|y\|_2^p),
\end{align*}
and for any $h\in \cD((-A)^{\vartheta})$ with $\vartheta\in[0,1],$
	\begin{align*}
	|D_z\phi(t,x,y,\bar x,z).(-A)^\vartheta h|&= D^2_{z}\bar u(T-t,\bar x,z).(\delta F(x,y),(-A)^\vartheta h)\no\\&\leq  C_3\,(T-t)^{-\vartheta}(1+\|x\|_1+\|y\|_2^{p})\|h\|_1.
	\end{align*}
	Furthermore, by the definition of $\sigma$ in (\ref{sst}),
	we have
	\begin{align*}
	\overline{\delta F\cdot\nabla_z\Phi}(t,x,\bar x,z)&=\int_{H_2} D_z\Phi(t,x,y,\bar x,z).\delta F(x,y)\mu^x(\dif y)\\
&=\int_{H_2}D_z^2\tilde u(t,\bar x,z).(\Psi(x,y),\delta F(x,y))\mu^x(\dif y)\\
&=\frac{1}{2}Tr(D_z^2\tilde u(t,\bar x,z)\sigma(x)\sigma^*(x)).
	\end{align*}
	Thus, it follows by (\ref{we2}) directly that for any $\gamma\in(0,1/2),$
	$$
	|\sN_3(T,\eps)|\leq C_3\; \eps^{1/2- \gamma}.
	$$
Combining the above computations, we get the desired result.
\end{proof}

\section{Appendix}

Throughout this section, we assume that ({\bf A1}) and ({\bf A2})   hold, $F\in C^{1,\eta}_p(H_1\times H_2,H_1)$ and $G\in C^{1,\eta}_b(H_1\times H_2,H_2)$ with $\eta>0$. We have the following result.

\begin{lemma}\label{la41}
For any $(x,y)\in H_1\times H_2$, there exists  a unique mild solution for the equation (\ref{spde1}), i.e., for every  $t\geq0$,
	\begin{equation} \label{spde22}
	\left\{ \begin{aligned}
	&X^{\eps}_t =e^{tA}x+\int_0^te^{(t-s)A}F(X^{\eps}_s, Y^{\eps}_s)\dif s+\int_0^te^{(t-s)A}\dif W^1_s,\\
	& Y^{\eps}_t =e^{\frac{t}{\eps}B}y+\eps^{-1}\int_0^te^{\frac{t-s}{\eps}B}G(X^{\eps}_s, Y^{\eps}_s)\dif s+\eps^{-1/2}\int_0^te^{\frac{t-s}{\eps}B}\dif W_s^2.\\
	\end{aligned} \right.
	\end{equation}
Moreover, for any $T>0$, $q\geq1$ and $x\in \cD((-A)^{\theta})$ with $\theta\in[0,1)$, we have
\begin{align}\label{msx}
	\sup\limits_{\eps\in(0,1)}\sup\limits_{t\in[0,T]}\mE\| (-A)^\theta X^{\eps}_t\|_1^{q}\leq C_{\theta,q,T}\,\big(1+\| x\|_{(-A)^\theta}^{q}+\| y\|_2^{pq}\big)
	\end{align}
	and
\begin{align}\label{msy}
	\sup\limits_{\eps\in(0,1)}\sup\limits_{t\in[0,T]}\mE\| Y^{\eps}_t\|_2^{q}\leq C_{q,T}\,(1+\| y\|_2^{q}),
	\end{align}
where $C_{\theta,q,T},C_{q,T}>0$ are constants.
\end{lemma}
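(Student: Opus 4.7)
The plan is to adapt standard mild-solution techniques for slow-fast SPDEs to the present H\"older-in-$y$ setting. For existence and uniqueness of the mild solution (\ref{spde22}), I would first establish well-posedness of the Galerkin approximation (\ref{xyz}) — this is a finite-dimensional SDE with coefficients Lipschitz in $x$ and $\eta$-H\"older in $y$, for which existence and uniqueness follow from the Da Prato-Flandoli type results already invoked for the frozen equation in Lemma \ref{long}. Then one passes to the limit using the uniform-in-$n$ a priori bounds established below, together with the convergence given by Lemma \ref{xyn}. Pathwise uniqueness of the limiting equation is inherited through standard arguments using the Lipschitz-in-$x$ property of $F$ and $G$ and Zvonkin-type regularization in the $y$-variable as in \cite{DF}.

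For (\ref{msy}), I would start from the mild formulation
$$
Y_t^\eps = e^{tB/\eps}y + \eps^{-1}\int_0^t e^{(t-s)B/\eps}G(X_s^\eps,Y_s^\eps)\dif s + \eps^{-1/2}\int_0^t e^{(t-s)B/\eps}\dif W_s^2.
$$
Using the dissipativity $\|e^{tB/\eps}\|_{\sL(H_2)}\leq e^{-\beta_1 t/\eps}$ and the uniform bound $\|G\|_\infty<\infty$ (from $G\in C_b^{1,\eta}$), the first two terms are controlled by $\|y\|_2 + C/\beta_1$ independently of $\eps$. For the stochastic convolution, the change of variable $u=s/\eps$ rewrites its covariance as $\int_0^{t/\eps}e^{uB}Q_2 e^{uB^*}\dif u$, which is uniformly bounded in $(t,\eps)$ by assumption ({\bf A2}). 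Taking $q$-th moments and applying the Burkholder-Davis-Gundy inequality yields (\ref{msy}).

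For (\ref{msx}), applying $(-A)^\theta$ to the mild equation and using $\|(-A)^\theta e^{(t-s)A}\|_{\sL(H_1)}\leq C(t-s)^{-\theta}$ together with the growth bound $\|F(x,y)\|_1\leq C(1+\|x\|_1+\|y\|_2^p)$, I would derive
$$
\big(\mE\|(-A)^\theta X_t^\eps\|_1^q\big)^{1/q}\leq C\Big(\|x\|_{(-A)^\theta}+1+\|y\|_2^p+\int_0^t(t-s)^{-\theta}\big(\mE\|(-A)^\theta X_s^\eps\|_1^q\big)^{1/q}\dif s\Big),
$$
after inserting (\ref{msy}) and using $\|x\|_1 \leq \alpha_1^{-\theta}\|x\|_{(-A)^\theta}$. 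The stochastic convolution $\int_0^t e^{(t-s)A}\dif W_s^1$ contributes a term uniformly bounded in $L^q(\Omega;\cD((-A)^\theta))$, since $Tr((-A)^{2\theta}Q_1)<\infty$ by interpolation between $Tr(Q_1)<\infty$ and $Tr((-A)Q_1)<\infty$ (both in ({\bf A2})), given $\theta<1/2$. Henry's generalized Gronwall inequality for singular kernels (valid for $\theta<1$) then closes the estimate uniformly in $\eps\in(0,1)$.

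The main delicate point is the uniformity in $\eps$ for the fast process: it relies crucially on the boundedness of $G$, so that the $\eps^{-1}$ factor in front of the drift integral is absorbed by the dissipativity of $B/\eps$, and on ({\bf A2}), so that the rescaled stochastic convolution is uniformly square-integrable. The H\"older-in-$y$ regularity plays a role only in the existence proof; the pure growth assumptions are enough for the a priori moment bounds.
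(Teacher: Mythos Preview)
Your overall strategy matches what the cited references do (the paper's own proof is essentially ``see \cite{DF}, \cite{Br2}, \cite{Ce}''), and your treatment of (\ref{msy}) is correct: boundedness of $G$ absorbs the $\eps^{-1}$ via the dissipativity of $e^{tB/\eps}$, and the rescaled stochastic convolution is controlled uniformly by ({\bf A2}).

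There is, however, a genuine gap in your argument for (\ref{msx}) concerning the range of $\theta$. You bound the stochastic convolution in $L^q(\Omega;\cD((-A)^\theta))$ by requiring $Tr((-A)^{2\theta}Q_1)<\infty$, which via interpolation between $Tr(Q_1)$ and $Tr((-A)Q_1)$ indeed only gives $\theta\leq 1/2$. But the lemma is stated for the full range $\theta\in[0,1)$. The point you are missing is the smoothing coming from the \emph{time integral}: by BDG,
\[
\mE\Big\|\int_0^t (-A)^\theta e^{(t-s)A}\dif W_s^1\Big\|_1^q
\leq C_q\Big(\int_0^t\|(-A)^\theta e^{sA}Q_1^{1/2}\|_{\sL_2(H_1)}^2\dif s\Big)^{q/2}
= C_q\Big(\tfrac12\sum_n \lambda_{1,n}\alpha_n^{2\theta-1}(1-e^{-2\alpha_n t})\Big)^{q/2},
\]
so what is actually needed is $\sum_n \lambda_{1,n}\alpha_n^{2\theta-1}<\infty$, i.e.\ $Tr((-A)^{2\theta-1}Q_1)<\infty$, which holds for all $\theta\leq 1$ under ({\bf A2}) (for $\theta\leq 1/2$ trivially from $Tr(Q_1)<\infty$, and for $\theta\in(1/2,1]$ by H\"older between $Tr(Q_1)$ and $Tr((-A)Q_1)$). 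The constraint $\theta<1$ in the lemma comes solely from the integrability of the singular kernel $(t-s)^{-\theta}$ in the drift term, as you correctly note when invoking Henry's Gronwall inequality.

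A secondary remark: your proposed route to well-posedness via Galerkin approximation and then ``the convergence given by Lemma~\ref{xyn}'' is circular as written, since Lemma~\ref{xyn} already presupposes the existence of $X_t^\eps$ (and in the paper is proved \emph{after} Lemma~\ref{la41}). The paper instead invokes \cite[Theorem~7]{DF} directly for the infinite-dimensional system; if you want to go through Galerkin you need a self-contained tightness/convergence argument, not a forward reference.
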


\begin{proof}
The well-posedness  of SPDE (\ref{spde1}) with H\"older continuous coefficients follows from \cite[Theorem 7]{DF}. Furthermore, estimate (\ref{msx}) can be proved similarly as in \cite[Proposition 2.10]{Br2} or \cite[Proposition 4.3]{Ce}, and estimate (\ref{msy}) can be proved as  in \cite[Proposition 4.2]{Ce}. We omit the details here.
\end{proof}

Note that estimate (\ref{msx}) holds only for $\theta<1$. In order to get the estimate for $\theta=1$, we need some extra regularity results for $X_t^\eps$ and $Y_t^\eps$ with respect to  the time variable. The following two results extend \cite[Proposition 4.4]{Ce} and \cite[Proposition A.4]{Br1}, respectively.

\bl\label{la42}
Let $T>0$, $\gamma\in[0,1]$, $x\in \cD((-A)^\theta)$ with $\theta\in[0,\gamma]$ and $y\in H_2$. Then for  every  $q\geq1$ and $0< s\leq t\leq T,$	 we have
\begin{align*}
	\Big(\mE\|  X_t^\eps- X_s^\eps\|_1^q\Big)^{\frac{1}{q}}\leq C_{\theta,\gamma,q,T}\,&\bigg(\frac{{(t-s)}^{\gamma}}{s^{\gamma-\theta}}e^{-\frac{\alpha_1}{2}s}\|x\|_{(-A)^\theta} \\
&\quad+(t-s)^{\frac{1}{2}}\big(1+\|x\|_1+\|y\|_2^p\big)\bigg),
	\end{align*}
where $C_{\theta,\gamma,q,T}>0$ is a constant.
\el

\begin{proof}
In view of (\ref{spde22}), we have
\begin{align}\label{xi5}
X_t^{\eps}-X_s^{\eps}&=(e^{tA}-e^{sA})x+\int_s^te^{(t-r)A}F(X_r^{\eps},Y_r^{\eps})\dif r\no\\
&\quad+\int_0^s(e^{(t-r)A}-e^{(s-r)A})F(X_r^{\eps},Y_r^{\eps}))\dif r+\int_s^te^{(t-r)A}\dif W_r^1\no\\
&\quad+\int_0^s(e^{(t-r)A}-e^{(s-r)A})\dif W_r^1=:\sum\limits_{i=1}^5\sX_i(t,s).
\end{align}
Below, we estimate each term on the right hand side of (\ref{xi5}) separately. For the first term, by Proposition \ref{arp} (iii)  we easily get
$$\|\sX_1(t,s)\|_1\leq C_1\,\frac{{(t-s)}^{\gamma}}{s^{\gamma-\theta}} e^{-\frac{\alpha_1}{2}s}\|x\|_{(-A)^\theta}.$$
For the second term, by Minkowski's inequality and Lemma \ref{la41}, we deduce that
\begin{align*}
\mE\|\sX_2(t,s)\|_1^q
&\leq\left(\int_s^t\Big(\mE\|e^{(t-r)A}F(X_r^{\eps},Y_r^{\eps})\|_1^q\Big)^{1/q}\dif r\right)^{q}\\
&\leq C_2\,(t-s)^q\big(1+\|x\|_1^q+\|y\|_2^{pq}\big).
\end{align*}
Similarly, using Proposition \ref{arp} (ii), Lemma \ref{la41} and Minkowski's inequality again, we have
\begin{align*}
\mE\|\sX_3(t,s)\|_1^q
&\leq\left(\int_0^s\Big(\mE\|(e^{(t-s)A)}-I)e^{(s-r)A}F(X_r^{\eps},Y_r^{\eps})\|_1^q
\Big)^{1/q}\dif r\right)^{q}\\
&\leq C_3\,(t-s)^{q/2}\left(\int_0^s\Big(\mE\|(-A)^{1/2} e^{(s-r)A}F(X_r^{\eps},Y_r^{\eps})\|_1^q\Big)^{1/q}\dif r\right)^{q}\\
&\leq C_3\,(t-s)^{q/2}\big(1+\|x\|_1^q+\|y\|_2^{pq}\big).
\end{align*}
Using Burkholder-Davis-Gundy's inequality and the assumption {\bf (A2)}, we further get
\begin{align*}
\mE\|\sX_4(t,s)\|_1^q\leq C_4\,\left(\int_s^t\|e^{(t-r)A}Q_1^{1/2}\|_{\sL_2(H_1)}^2\dif r\right)^{q/2}\leq C_4\,(t-s)^{q/2},
\end{align*}
and
\begin{align*}
\mE\|\sX_5(t,s)\|_1^q
&\leq C_5\,(t-s)^{q/2}\left(\int_0^s\|(-A)^{1/2} e^{(s-r)A}Q_1^{1/2}\|_{\sL_2(H_1)}^2\dif r\right)^{q/2}\\
&\leq C_5\, (t-s)^{q/2}.
\end{align*}
Combining the above computations, we get the desired result.
\end{proof}

\bl\label{la43}
Let $T>0$, $\gamma\in[0,1/2]$, $x\in H_1$ and $y\in \cD((-B)^\theta)$ with $\theta\in[0,\gamma]$. Then for  every  $q\geq1$ and $0< s\leq t\leq T,$	 we have
\begin{align*}
\big(\mE\|  Y_t^\eps- Y_s^\eps\|_2^{q}\big)^{\frac{1}{q}}\leq C_{\theta,\gamma,q,T}\,\bigg(\frac{(t-s)^{\gamma}}{s^{\gamma-\theta}\eps^{\theta}}e^{-\frac{\mu_1}{2\eps}s}\|y\|_{(-B)^\theta}
+\frac{(t-s)^{\gamma}}{\eps^{\gamma}}\bigg),
	\end{align*}
where $C_{\theta,\gamma,q,T}>0$ is a constant.
\el

\begin{proof}
In view of  (\ref{spde22}), we have
\begin{align*}
Y_t^{\eps}-Y_s^{\eps}&=(e^{\frac{t}{\eps}B}-e^{\frac{s}{\eps}B})y+\frac{1}{\eps}\int_s^te^{\frac{(t-r)}{\eps}B}G(X_r^{\eps},Y_r^{\eps})\dif r\\
&\quad+\frac{1}{\eps}\int_0^s(e^{\frac{(t-r)}{\eps}B}-e^{\frac{(s-r)}{\eps}B})G(X_r^{\eps},Y_r^{\eps}))\dif r+\frac{1}{\sqrt{\eps}}\int_s^te^{\frac{(t-r)}{\eps}B}\dif W_r^2\\
&\quad+\frac{1}{\sqrt{\eps}}\int_0^s(e^{\frac{(t-r)}{\eps}B}-e^{\frac{(s-r)}{\eps}B})\dif W_r^2=:\sum\limits_{i=1}^5\sY_i(t,s)
\end{align*}
In exactly the same way as in the proof of Lemma \ref{la42}, we deduce that
$$
\|\sY_1(t,s)\|_2\leq C_1\,\frac{{(t-s)}^{\gamma}}{s^{\gamma-\theta}\eps^{\theta}} e^{-\frac{\mu_1}{2\eps}s}\|y\|_{(-B)^\theta},
$$
and for any $\gamma\in[0,1/2]$,
\begin{align*}
\mE\|\sY_2(t,s)\|_2^q\leq C_2\!\left(\frac{1}{\eps}\int_s^te^{\frac{-\mu_1}{2\eps}(t-r)}\Big
(\mE\|G(X_r^{\eps},Y_r^{\eps})\|_2^q\Big)^{1/q}\dif r\right)^{q}\!\leq C_2\,\frac{(t-s)^{\gamma q}}{\eps^{\gamma q}},
\end{align*}
and
\begin{align*}
\mE\|\sY_3(t,s)\|_1^q
&\leq\left(\frac{1}{\eps}\int_0^s\Big(\mE\|(e^{\frac{(t-s)}{\eps}B)}-I)e^{\frac{(s-r)}{\eps}B} G(X_r^{\eps},Y_r^{\eps})\|_2^q\Big)^{1/q}\dif r\right)^{q}\\
&\leq C_3\,\frac{(t-s)^{\gamma q}}{\eps^{\gamma q}}\left(\int_0^{s/\eps}\|(-B)^\gamma e^{rB}\|_{\sL(H_2)}\dif r\right)^{q}\\
&\leq C_3\,\frac{(t-s)^{\gamma q}}{\eps^{\gamma q}}\left(\int_0^{s/\eps}r^{-\gamma}e^{\frac{-\mu_1}{2}r}\dif r\right)^{q}\leq C_3\,\frac{(t-s)^{\gamma q}}{\eps^{\gamma q}}.
\end{align*}
To control the last two terms, by Burkholder-Davis-Gundy's inequality and the assumption ({\bf A2}), we deduce that  for any $\gamma\in[0,1],$
\begin{align*}
\mE\|\sY_4(t,s)\|_1^q&\leq C_4\,\left(\frac{1}{\eps}\int_s^t\|e^{\frac{(t-r)}{\eps}B}Q_2^{1/2}\|_{\sL_2(H_2)}^2\dif r\right)^{\frac{q}{2}}\\
&\leq C_4\,\left(\sum\limits_{n=1}^\infty\lambda_{2,n}\mu_n^{\gamma-1} \frac{(t-s)^\gamma}{\eps^\gamma}\right)^{q/2}\leq C_4\,\frac{(t-s)^\frac{\gamma q}{2}}{\eps^\frac{\gamma q}{2}},
\end{align*}
and for $\gamma\in[0,1/2],$
\begin{align*}
\mE\|\sY_5(t,s)\|_1^q
&\leq C_5\,\frac{(t-s)^{\gamma q}}{\eps^{\gamma q}}\left(\frac{1}{\eps}\int_0^s\|(-B)^\gamma e^{\frac{(s-r)}{\eps}B}Q_2^{1/2}\|^2_{\sL_2(H_2)}\dif r\right)^{q/2}\\
&\leq C_5\,\frac{(t-s)^{\gamma q}}{\eps^{\gamma q}}\left(\sum\limits_{n=1}^\infty\lambda_{2,n}\mu_n^{2\gamma-1}\right)^{q/2}\leq C_5\,\frac{(t-s)^{\gamma q}}{\eps^{\gamma q}}.
\end{align*}
Combining the above computations, we get the desired result.
\end{proof}

Now we can prove the following moment estimate.

\bl\label{la44}
Let $T>0$, $x\in \cD((-A)^\theta)$ with $\theta\in[0,1]$ and $y\in H_2$. Then for any $q\geq1$, $\gamma>0$ and $0\leq t\leq T$, we have
	\begin{align*}
\big(\mE\| A  X_t^\eps\|_1^q\big)^{1/q}\leq C_{\theta,\gamma,q,T}\Big(t^{(\theta-1)}+\eps^{-\gamma }\Big)\big(1+\|x\|^2_{(-A)^\theta}+\|y\|_{2}^{2p}\big),
	\end{align*}
where $C_{\theta,\gamma,q,T}>0$ is a constant.
\el

\begin{proof}
We have
\begin{align*}
AX_t^{\eps}&=Ae^{tA}x+\int_0^tAe^{(t-s)A}F(X_t^{\eps},Y_t^{\eps})\dif s\\
&\quad+\int_0^tAe^{(t-s)A}\big(F(X_s^{\eps},Y_s^{\eps})-F(X_t^{\eps},Y_t^{\eps})\big)\dif s\\
&\quad+\int_0^tAe^{(t-s)A}\dif W_s^1=:\sum\limits_{i=1}^4\sX_i(t,\eps)
\end{align*}
By Proposition \ref{arp} (i), we easily see that
$$
\|\sX_1(t,\eps)\|_1\leq C_1\,t^{(\theta-1)}\|x\|_{(-A)^\theta}.
$$
For the second term, note that
\begin{align*}
\int_0^tAe^{(t-s)A}F(X_t^{\eps},Y_t^{\eps})\dif s&=\int_0^t\p_te^{(t-s)A}F(X_t^{\eps},Y_t^{\eps})\dif s\\
&=-(e^{tA}-I)F(X_t^{\eps},Y_t^{\eps}),
\end{align*}
hence we deduce that
\begin{align*}
\mE\|\sX_2(t,\eps)\|_1^q\leq C_2\,(1+\mE\|X_t^{\eps}\|_1^q+\mE\|Y_t^{\eps}\|_2^{pq})\leq C_2\,\big(1+\|x\|_1^q+\|y\|_2^{pq}\big).
\end{align*}
Furthermore, by applying Lemmas \ref{la42} and \ref{la43} with $\theta=0$, we get that for any $\gamma\in(0,1/2]$,
\begin{align*}
\mE\|\sX_3(t,\eps)\|_1^q&\leq \! C_3\,(1+\|x\|_1^q+\|y\|_2^{pq})\Bigg(\int_0^t(t-s)^{-1}\bigg[\Big(\mE\big[\|X^\eps_t- X^\eps_s\|_1^{2q}\big]\Big)^{1/2q}\\
&\qquad\qquad\qquad\qquad\qquad\qquad+\Big(\mE\big[\|Y_t^{\eps}-Y_s^{\eps}\|_2^{2\eta q}\big]\Big)
^{1/2q}\bigg]\dif s\Bigg)^q\\
&\leq  C_3\,\big(1+\|x\|_1^{2q}+\|y\|_2^{2pq}\big)\bigg(\int_0^t(t-s)^{\eta\gamma-1}\Big(\frac{1}{s^{\eta\gamma}}+\frac{1}{\eps^{\eta\gamma}}\Big)\dif s\bigg)^q\\
&\leq C_3\,\eps^{-\gamma q}\big(1+\|x\|_1^{2q}+\|y\|_2^{2pq}\big).
\end{align*}
Finally, by Burkholder-Davis-Gundy's inequality and assumption {\bf(A2)}, we have
\begin{align*}
\mE\|\sX_4(t,\eps)\|_1^q\leq  C_4\left(\int_0^t\|Ae^{(t-s)A}Q_1^{1/2}\|^2_{\sL_2(H_1)}\dif s\right)^{q/2}\leq C_4.
\end{align*}
The conclusion follows by the above estimates.
\end{proof}

The following results for the averaged equation can be proved as in Lemmas \ref{la41}, {\ref{la42}} and {\ref{la44}}, so we omit the details here.

\begin{lemma}\label{bx}
	 For  $x\in H_1$, the averaged equation (\ref{spde2}) has a unique mild solution, i.e., for all $t>0$,
	\begin{align}\label{msb}\bar{X}_t =e^{tA}x+\int_0^te^{(t-s)A}\bar{F}(\bar{X}_s)\dif s+\int_0^te^{(t-s)A}\dif W^1_s.\end{align}
In addition,  we have:

\vspace{1mm}
\noindent(i) For any  $q\geq1$ and $x\in \cD(-A)^{\theta}$ with $\theta\in[0,1)$,
	\begin{align*}
	\sup\limits_{t\in[0,T]}\mE\| (-A)^{\theta}\bar X_t\|_1^q\leq C_{\theta,q,T}(1+\| x\|_{(-A)^\theta}^q);
	\end{align*}
(ii) For any $q\geq1$, $\theta\in[0,1]$ and $0\leq t\leq T,$
\begin{align*}
	(\mE\| A\bar X_t\|_1^q)^{1/q}\leq C_{\theta,q,T}(1+t^{\theta-1}\| x\|_{(-A)^\theta});
	\end{align*}
(iii) For any $q\geq1$, $\gamma\in[0,1]$, $x\in \cD((-A)^\theta)$ with $\theta\in[0,\gamma]$  and $0< s\leq t\leq T,$	
\begin{align*}
	\Big(\mE\|  \bar X_t- \bar X_s\|_1^q\Big)^{\frac{1}{q}}&\leq C_{\theta,\gamma,q,T}\,\bigg(\frac{{(t-s)}^{\gamma}}{s^{\gamma-\theta}}e^{-\frac{\alpha_1}{2}s}\|x\|_{(-A)^\theta} +(t-s)^{\frac{1}{2}}\big(1+\|x\|_1\big)\bigg),
	\end{align*}
where $C_{\theta,q,T}, C_{\theta,\gamma,q,T}>0$ are constants.
\end{lemma}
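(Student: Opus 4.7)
The plan is to mimic the proofs of Lemmas \ref{la41}, \ref{la42} and \ref{la44}, exploiting two simplifications specific to the averaged equation (\ref{spde2}): there is no fast variable, and by Lemma \ref{aveF} the averaged drift $\bar F$ is globally Lipschitz on $H_1$. The three assertions will be established in the order (i) $\to$ (iii) $\to$ (ii), so that each one may freely invoke the preceding ones. Well-posedness follows from a standard Picard iteration, the contraction being immediate from the Lipschitz bound on $\bar F$. For the moment estimate in (i), I would decompose (\ref{msb}) into its three mild ingredients, bound $\|(-A)^\theta e^{tA}x\|_1\leq C\|x\|_{(-A)^\theta}$ by Proposition \ref{arp} (i), estimate
$$\|(-A)^\theta e^{(t-s)A}\bar F(\bar X_s)\|_1\leq C(t-s)^{-\theta}\bigl(1+\|\bar X_s\|_1\bigr)$$
again by Proposition \ref{arp} (i) with $\theta<1$, dominate the stochastic convolution via Burkholder-Davis-Gundy together with $Tr((-A)Q_1)<\infty$ from ({\bf A2}), and close with Gronwall's inequality.

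For assertion (iii), I split $\bar X_t-\bar X_s$ exactly as in Lemma \ref{la42}:
\begin{align*}
\bar X_t-\bar X_s &= (e^{tA}-e^{sA})x+\int_s^t e^{(t-r)A}\bar F(\bar X_r)\dif r\\
&\quad+\int_0^s\bigl(e^{(t-r)A}-e^{(s-r)A}\bigr)\bar F(\bar X_r)\dif r\\
&\quad+\int_s^t e^{(t-r)A}\dif W_r^1+\int_0^s\bigl(e^{(t-r)A}-e^{(s-r)A}\bigr)\dif W_r^1.
\end{align*}
Each of the five pieces is controlled by Proposition \ref{arp} (ii)--(iii), Minkowski's inequality, part (i), and the Burkholder-Davis-Gundy inequality combined with ({\bf A2}); the right-hand side of the stated bound involves only $(1+\|x\|_1)$ rather than $(1+\|x\|_1+\|y\|_2^p)$ precisely because $\bar F$ depends on $x$ alone.

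For assertion (ii), I mirror Lemma \ref{la44}: write
\begin{align*}
A\bar X_t &= Ae^{tA}x+\int_0^t Ae^{(t-s)A}\bar F(\bar X_t)\dif s\\
&\quad+\int_0^t Ae^{(t-s)A}\bigl(\bar F(\bar X_s)-\bar F(\bar X_t)\bigr)\dif s+\int_0^t Ae^{(t-s)A}\dif W_s^1.
\end{align*}
Proposition \ref{arp} (i) handles the first term, producing the $t^{\theta-1}\|x\|_{(-A)^\theta}$ contribution. The second term telescopes via $\int_0^t Ae^{(t-s)A}\dif s=-(e^{tA}-I)$ and is bounded using part (i). For the third term, the Lipschitz property of $\bar F$ together with part (iii) reduces matters to a time integral of the form $\int_0^t(t-s)^{-1+\gamma}(\cdot)\dif s<\infty$; crucially, in contrast with Lemma \ref{la44} no factor of $\eps^{-\gamma}$ arises, because the averaged system carries no fast component. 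The stochastic integral is again tamed by Burkholder-Davis-Gundy and ({\bf A2}).

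The argument is non-circular by the choice of ordering: part (iii) uses (i) to bound $\|\bar X_s\|_1$, and part (ii) uses both (i) and the time-regularity from (iii). No estimate beyond those already appearing in the Appendix is needed; the main (mild) obstacle is simply the bookkeeping of the exponents of $t-s$ and $s$ so that every time integral remains integrable near its singularities, exactly as in Lemmas \ref{la41}--\ref{la44}.
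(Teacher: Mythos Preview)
Your proposal is correct and matches the paper's approach exactly: the paper omits the proof entirely, stating only that it ``can be proved as in Lemmas \ref{la41}, \ref{la42} and \ref{la44}'', and your plan is precisely to mimic those three lemmas in the natural order (i) $\to$ (iii) $\to$ (ii), exploiting the absence of the fast variable and the Lipschitz bound on $\bar F$ from Lemma \ref{aveF}.
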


\noindent{\bf {Acknowledgements:}} This work is supported  by the DFG through CRC 1283,  the Alexander-von-Humboldt foundation  and NSFC (No. 11701233, 11931004).


\end{document}